
\documentclass[11pt]{article}
\usepackage[latin1]{inputenc}
\usepackage{amsmath,amssymb}
\usepackage{xcolor}
\usepackage{multicol}
\usepackage{ulem}
\usepackage{latexsym,epsfig}
\usepackage{amssymb,amsmath,amsfonts,amscd}
\usepackage{ifthen}
\usepackage{lscape}

\usepackage{todonotes}

\newcommand{\cm}{\noindent{\bf Proof.} }

\newtheorem{theo}{\textbf{Theorem}\ }
[section]
\newtheorem{lemma}[theo]{\textbf{Lemma}\ }
\newtheorem{criterion}[theo]{\textbf{Criterion}\ }

\newtheorem{defi}[theo]{\textbf{Definition}\ }
\newtheorem{prop}[theo]{\textbf{Proposition}\ }

\newtheorem{remark}[theo]{\textbf{Remark}\ }

\newtheorem{fact}[theo]{\textbf{Fact}\ }
\hoffset=-1,5cm\voffset=-1.5cm
\hyphenation{si-que}\textheight=22cm
\textwidth=16cm\def \fin{\hfill{$\Box$}}
 
\newenvironment{proof}{\par \cm}{\fin \par} 
 


    \newcommand{\cB}{{\mathcal{B}}}      
\newcommand{\cM}{{\mathcal{M}}}\newcommand{\cW}{{\mathcal{W}}}

\newcommand{\RR}{\mathbb{R}}\newcommand{\EE}{\mathbb{E}}
\newcommand{\PP}{\mathbb{P}}
\renewcommand{\SS}{\mathbb{S}}
\newcommand{\NN}{\mathbb{N}}
\newcommand{\XX}{\mathbb{X}}\newcommand{\ZZ}{\mathbb{Z}}

\newcommand{\Pb}{{\overline{P}}}

\newcommand{\mub}{{\overline{\mu}}}

\newcommand{\supp}{{\mathrm{supp}}}


\setlength{\parindent}{0em}
\setlength{\parskip}{1ex}


\newcommand{\Pd }{  {\bf P} (\RR^d) }
\newcommand{\Rpm}{\RR^d_{\pm}}
\newcommand{\XN }{  \mathcal{X}_N }
\newcommand{\Sd }{  \SS^{d-1}  }

\date{}

\begin{document}

\title{
{\bf Recurrence of  
 multidimensional affine recursions  
  in the critical case}  
 }
\author{
R. Aoun \footnote{
Laboratoire d'Analyse et de Math\'ematiques Appliqu\'ees UMR 8050,
Universit\'e Gustave Eiffel, Universit\'e Paris-Est Cr\'eteil, CNRS France. \quad  richard.aoun@univ-eiffel.fr
} ,
S. Brofferio\footnote{
Laboratoire d'Analyse et de Math\'ematiques Appliqu\'ees UMR 8050,
Universit\'e Paris-Est Cr\'eteil, Universit\'e Gustave Eiffel, CNRS France. \ sara.brofferio@u-pec.fr}\ \ 
			\ \& \ M. Peign\'e\footnote{Email: Institut Denis Poisson UMR 7013,  Universit\'e de Tours, Universit\'e d'Orl\'eans, CNRS  France. marc.peigne@univ-tours.fr}}
\maketitle

%

\begin{abstract}
   We prove, under different natural hypotheses,  that the random multidimensional affine recursion $X_n=A_nX_{n-1}+B_n\in\RR^d, n \geq 1,$ is recurrent in the critical case. In particular we cover the cases where the matrices $A_n$ are similarities, invertible, rank 1 or with non negative coefficients.
      	These results are a consequence of a  criterion  of recurrence for a large class of affine recursions on $\mathbb R^d$, based on some moment  assumptions of the so-called ``reverse norm control random variable".
\end{abstract}

\noindent Keywords:   Affine recursion, recurrence, ladder epoch of a random walk, Markov walks

\noindent 
MSC 2020: 60J10, 60J05, 60B15.
 
\setcounter{tocdepth}{1}


\newpage
\section{Introduction}


We fix $d\geq 1$ and endow $\mathbb R^d$ with the euclidean norm $\vert \cdot \vert $ defined by   $  \displaystyle \vert x\vert := \sqrt{\sum_{i=1}^d  x_i^2}$ for any column vector $x=(x_i)_{1\leq i \leq d}$.

We  note   $\mathcal M(d, \mathbb R)$ the semi-group of $d\times d$-matrices with real valued coefficients. For any $A \in \mathcal M(d, \mathbb R)$ and $x \in \mathbb R^d$, we note $Ax$ the image of $x$ by the linear action of $A$ and   $\Vert A\Vert$  the norm of the matrix $A \in \mathcal M(d, \mathbb R)$ defined by 
$\Vert A\Vert= \sup_{ x \in \mathbb R^d, \vert x\vert =1 }\vert Ax\vert$.

 Let $g_n = (A_n, B_n),n = 1,2,...$ be independent and identically distributed random variables defined on the probability space  $(\Omega, \mathcal T, \mathbb P)$ with   distribution $\mu$ on $\mathcal M(d, \mathbb R)\times \mathbb R^d$. The distribution of the matrices $A_n$ is denoted $\bar \mu$.

Let $(X_n)_{n \geq 0}$ be the ``affine recursion" on $\mathbb R^d$  defined inductively by : for any $n \geq 0$
\begin{equation}\label{eq_def_X_n}
	X_{n+1} = A_{n+1}X_n+B_{n+1}.
\end{equation}
The process  $(X_n)_{n \geq 0}$ is  a Markov chain   on $\mathbb R^d$ and we are interested  in its recurrence properties in the ``critical case'', that is 
when the Lyapunov exponent  $\displaystyle \gamma_{\bar \mu}:=\lim_{n\to\infty}\frac{\mathbb E(\ln \Vert A_n\cdots A_1\Vert)}{n}$ 
 of the  product of  the random matrices $A_k$ equals $0$.

When $X_0=x$ for some fixed $x \in \mathbb R^d$, we set  $X_n= X_n^x$.  We say that the process $(X_n)_{n \geq 0}$  is {\it (topologically) 
 recurrent } on $\mathbb R^d$ when  there exists $K>0$ such that, for any $x \in \mathbb R^d$,  
\[
\mathbb P(\liminf_{n \to +\infty} \vert X_n^x\vert \leq K)=1.
\]

In this article, we propose  a general criterion which ensures that $(X_n)_{n \geq 0}$ is  recurrent  on $\mathbb R^d$. Then we apply it to four interesting situations, depending on whether the matrices $A_n$ 
  are similarities,  rank 1, invertible   or non negative matrices, with some restrictive hypotheses  in each situation   which appear classically in the theory of product of random matrices and are  labeled respectively  {\bf S}, {\bf Rk1}, {\bf I} and {\bf Nn} (see Section \ref{examples} for the precise lists).   
      Our main statement is:   
   \begin{theo}\label{thm-main} 
  	Assume that 
  	\begin{enumerate}
  		\item  Hypotheses  {\bf S}, {\bf Rk1}, {\bf I} or  {\bf Nn}  hold for the random matrices $A_n$; 
  		\item   $\mathbb E\left((\ln^+\vert B_1\vert)^{\gamma}\right)<+\infty $ for some $\gamma >2$. 
  	\end{enumerate}
  	Then,  for any $x \in \mathbb R^d$,   the Markov chain $(X_n^x)_{n \geq 0}$ is  recurrent.
  \end{theo}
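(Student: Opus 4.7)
The plan is to apply the general recurrence criterion whose formulation and proof form the technical core of the paper: this criterion reduces recurrence of $(X_n^x)$ to a moment estimate on the so-called reverse norm control random variable associated with the products $A_1 A_2 \cdots A_n$. The natural entry point is the classical duality
\[
X_n^x \stackrel{d}{=} A_1 \cdots A_n x + \sum_{k=1}^{n} A_1 \cdots A_{k-1} B_k,
\]
which turns the forward affine recursion into a backward sum. In the strictly contracting case $\gamma_{\bar\mu}<0$ this sum converges and provides a stationary distribution; in the critical case $\gamma_{\bar\mu}=0$ one must instead track $\ln\vert X_n^x\vert$ and show that it fluctuates like a one-dimensional centered random (or Markov) walk perturbed by a controlled error coming from the noise sequence $(B_k)$.

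Concretely I would decompose $\ln\vert X_n^x\vert$ as $S_n+Y_n$, where $S_n$ is (up to a bounded discrepancy) the log-cocycle $\ln\Vert A_n\cdots A_1\Vert$, which is centered by the critical hypothesis, and $Y_n$ gathers the contribution of the $B_k$ and of the angular part of $X_n^x$. The recurrence of $S_n$ (Chung--Fuchs in the i.i.d.\ case, or its analogue for Markov walks with zero drift) produces infinitely many instants at which $S_n$ is close to $0$; at such times one must rule out that $Y_n$ diverges. This is exactly the role of the reverse norm control variable, and of the moment hypothesis $\mathbb E[(\ln^+\vert B_1\vert)^\gamma]<\infty$ for some $\gamma>2$: the extra moment beyond the threshold $2$ provides enough summability to dominate the ladder fluctuations of a centered walk with finite variance. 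The four hypotheses \textbf{S}, \textbf{Rk1}, \textbf{I}, \textbf{Nn} then serve only to verify the abstract assumption on the norm control variable: for similarities the norm decouples from the orthogonal part and the criterion is essentially trivial; for rank $1$ matrices the cocycle factors through a one-dimensional subspace; for invertible matrices one appeals to H\"older regularity of the Furstenberg measure on the projective space via Le Page--Guivarc'h type estimates; for non-negative matrices one uses Perron--Frobenius contraction on the positive cone together with Hennion-type spectral gap arguments.

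The main obstacle will be, in each of the four settings, to establish the required tail estimate on the reverse norm control variable, ensuring that its logarithm has a sufficient moment so that it is dominated by the ladder-height fluctuations of $S_n$. The most delicate case seems to be \textbf{Nn}, since non-negative matrices may have vanishing entries on the boundary of the support and their Perron directions may approach that boundary, degrading the contraction; this must be handled through a careful analysis of return times to a compact subset of the projective positive cone where the contraction is quantitative. A secondary difficulty is that in the cases \textbf{I} and \textbf{Nn} the walk $S_n$ is not a true random walk but a Markov walk (its increments depend on the current direction in $\mathbb P(\mathbb R^d)$ or in the positive projective cone), so the recurrence and ladder-epoch statements needed to close the argument are the Markov-walk versions, which accounts for the relevant keyword in the abstract and for the restrictive hypotheses imposed on each family.
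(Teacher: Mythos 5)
Your high-level plan is correct: the theorem does reduce to an abstract criterion (Criterion~\ref{thm-from-RCN-to-cons}) built around a moment bound on the reverse norm control variable $C_1(\overline V_0)$ and a moment bound on a ladder epoch of the log-cocycle $S_n=\ln|A_{n,1}\overline V_0|$, and the four hypotheses \textbf{S}, \textbf{Rk1}, \textbf{I}, \textbf{Nn} serve only to verify these two inputs (Propositions~\ref{zekbghjkd} and~\ref{prop-RCN}). You also correctly locate the threshold $\gamma>2$ in the Markov-walk ladder fluctuation estimate $\mathbb P(\ell_1^{(\rho)}>n)\lesssim n^{-1/2}$, which gives $\alpha<1/2$ and forces $\gamma\ge 1/\alpha>2$.

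Where your sketch goes wrong is in the mechanism of the criterion's proof. You propose to write $\ln|X_n^x|=S_n+Y_n$ with $S_n$ the centered log-cocycle and $Y_n$ an error gathering the $B_k$ and the angular part, then exploit recurrence of $S_n$ (Chung--Fuchs) and show $Y_n$ stays controlled at times when $S_n$ is near $0$. This additive decomposition of $\ln|X_n^x|$ is not available: $X_n^x=A_{n,1}x+B_{n,1}$ with $B_{n,1}=\sum_{k=1}^n A_n\cdots A_{k+1}B_k$, and $\ln|A_{n,1}x+B_{n,1}|$ does not split as you suggest; worse, in the critical case $B_{n,1}$ itself need not be tight, so there is no reason ``$Y_n$'' should be dominated at the random times where $S_n\approx 0$. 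The paper's actual argument (following Elie) is structurally different: one defines ladder times $\ell_k$ at which the projective cocycle has dropped below a fixed threshold $\rho$, then observes that the \emph{subsampled} process $(X_{\ell_n})_n$ is again an i.i.d.\ affine recursion whose linear part $\widetilde A_1=A_{\ell_1}\cdots A_1$ has \emph{strictly negative} Lyapunov exponent — because $\|\widetilde A_1\|\le C_1(\overline V_0)\cdot\rho$ — and whose translation part $\widetilde B_1=B_{\ell_1,1}$ is log-integrable once one controls $\max_{i\le\ell_1}\ln^+C_{i+1}(\overline V_i)$ via Lemma~\ref{lemmedusup}. One then invokes the \emph{contractive-case} result (Fact~\ref{fact-cont-case}) for the subsampled chain and deduces recurrence of the full chain as a consequence. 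So the reduction is not ``critical walk plus controlled error'' but ``subsample at cocycle ladder times to land back in the contractive regime.'' Without that step your argument has no way to handle $B_{n,1}$ and does not close; the role of the RNC variable is precisely to bound the intermediate operator norms $\|A_{\ell_1}\cdots A_{i+1}\|$ between ladder times, which your decomposition never isolates.

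Two smaller inaccuracies: the Markov-walk subtlety you attribute to cases \textbf{I} and \textbf{Nn} is also present in \textbf{Rk1} (that is the whole point of Section~\ref{sectionfluctuationsRk1}), while in case \textbf{S} the walk $S_n=\sum\ln a_k$ is genuinely i.i.d.; and the paper does not use Chung--Fuchs recurrence of $S_n$ at all — only the polynomial tail of the downward ladder epoch, which is a finer one-sided fluctuation estimate.
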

 In the case {\bf I}, by using the action of the $A_n$ on some suitable exterior product $\wedge^r \mathbb R^d, 1\leq r \leq d$ of $\mathbb R^d$, we may relax our assumptions and avoid the so-called hypothesis {\bf I}-{\it Proximality}. We refer to Theorem \ref{exterior} for details; the case of non-proximal invertible matrices requires some technical adaptations. We thought it would be interesting to deal first with the case where the proximality assumption is satisfied,  proving first a criterion   based on the properties of the action of products of the matrices $A_n$ on the projective space $\Pd$  and using   classical results on products of random matrices. 

  The   stochastic
recurrence equation
$X_{n+1}= a_{n+1} X_n + b_{n+1}$
on $\mathbb R$, where the $(a_n, b_n) _{n \geq 1} $ are independent and identically distributed   random variables with values in $\mathbb R_{*+}\times  \mathbb R$, 
has been extensively studied, with special attention given to the existence of  an invariant measure and its properties,    especially its tails. This   ``random coefficients  autoregressive process'' occurs in different domains, in particular in economics,  and has been  studied intensively for several decades.  
We refer to the book by D. Buraczewski, E. Damek  \& T. Mikosch \cite{BDM}  for a general survey of the topic  and references therein.  A rich literature covers the so called 
``contractive  case"    $\mathbb E[\ln a_1]<0$; this condition   ensures that, when $\mathbb E[\ln ^+\vert b_1\vert]<{+\infty}$,  there exists on $\mathbb R$  a unique invariant  probability measure for the process $(X_n)_{n \geq 0}$.   
  In the critical case  $\mathbb E[\ln a_1]=0$, the  affine recursion  is not positive recurrent \cite{BP92Ann}. Nevertheless, at the end of the '90,  P.  Bougerol \& L. Elie \cite{BE}   (see also \cite{BBE})   proved  that  in this case there exists on $\mathbb R$ an invariant and  unbounded Radon measure  $m$ for   $(X_n)_{n \geq 0}$ ; furthermore, if $\mathbb E((\ln a_1)^{2+\epsilon} )<+\infty$, the process   $(X_n)_n$  is  topologically recurrent  on $\mathbb R$. In a recent paper   \cite{AI23},  G.Alsmeyer \& A.Iksanov  investigate the sharpness of these moment conditions.

 The affine recursion $(X_n)_{n \geq 0}$ has also been considered in dimension $d\geq 2$. The  random variables $a_n$ and $b_n$ above are replaced respectively  by $d\times d$ random    matrices $A_n$ with real entries and random vectors $B_n$ in $\mathbb R^d$.  
  The equation (\ref{eq_def_X_n}) yields, for any $n \geq 0$,  
  $$
  X_{n}= A_n\cdots A_1 X_0+  \sum_{k=1}^n A_n \cdots A_{k+1} B_k.$$
  
  The behavior of the chain $(X_n)_{n \geq 0}$ is thus related to the linear action of  the  left products  $A_{n, k}:= A_n \cdots A_{k}, 1\leq k \leq n,$ of the matrices $A_i, i \geq 1$. A huge literature is devoted to these product of random matrices, mainly in the case where the $A_k$ are either invertible or non negative;  we refer here to  \cite{BL},  \cite {FK}, \cite{G}, \cite{ H} and  \cite{L}    with references therein. For $x \in \mathbb R^d, x \neq 0,$ the study of the vector $A_{n, 1} x$   is carried out via   that of the behavior  of the  directions $A_{n, 1} x/\vert A_{n, 1} x\vert, n \geq 0,  $ when $A_{n, 1} x\neq 0$ and   that of the norm $\vert A_{n, 1} x\vert$. 
  
 On the one hand,   the  control of the  projective action of the  matrices  $A_{n, k}$   relies on some contraction properties of the closed semi-group $T_{\bar \mu}$  generated by the support of  $\bar \mu$. This requires some restrictive assumptions :  for instance,    {\it proximality} and {\it strong irreducibility property} \footnote{More details on these notions are given in section 3}  in the case of products of invertible matrices  \cite{BL}, or  existence of   matrices with positive entries  when the $A_k$ are non negative   \cite{H}. On the other hand, the study of the norm  $ \vert A  _{n, 1}x \vert$  relies on  the   cocycle decomposition   of its logarithm   as   
$$
 S_n(x):= \ln  \vert A  _{n, 1}x \vert=
\ln \vert A_1 x\vert + \ln \left\vert A_2 \left({A_1x\over  \vert A_1x\vert}\right)\right\vert+ \ldots +
\ln \left\vert A_n \left({A_{n-1, 1}x\over  \vert A_{n-1, 1}x\vert}\right)\right\vert,
$$
which can be done as soon as $  A_{k, 1} x\neq 0$ for $k\geq 1$. The  process  $(\ln  \vert A  _{n, 1}x \vert)_{n \geq 1}$  is a fundamental, even iconic, example of Markov walks on $\mathbb R$; the degree of dependence between the  increments of this sum is controlled  by  the Markov chain $(A_{n, 1} x/\vert A_{n, 1} x\vert )_{n \geq 0}$, when it is well defined.

When $d\geq 2$, the  contractive case for the affine recursion corresponds  to the case  where the Lyapunov exponent $\gamma_{\bar \mu}$ associated with  the random matrices $ A_n$ is negative. The existence and uniqueness of an invariant probability  measure is obtained following the same strategy as the one developed in dimension 1 \cite{Bra86, BP92Ann}. Various properties of this measure  have been obtained in this case, based on results  of product of random matrices (see   \cite{BDM}, chap. 4 and references therein). As far as we know,  the existence and uniqueness of an invariant Radon measure  in the  critical case $\gamma_{\bar \mu}=0$  has been investigated only in the case of  non negative matrices   :  the existence and uniqueness  of an (infinite) invariant Radon measure is established  in  \cite{BPP}  when all the ratios $A_n(i, j)/A_n(k, \ell)$ are supposed to be  bounded from above and below, uniformly in the alea $\omega$ and $i, j, k, \ell$ in $\{1, \ldots, d\}$. This ensures that the ratio $ \Vert A_{n, 1}\Vert / \vert A_{n, 1} x\vert$ are themselves $\mathbb P$-a.s. bounded from above, uniformly in $n$, in $x$ (where $x$ is an unit vectors  with positive entries) and in the alea $\omega$. This is a quite strong hypothesis and one of the   aims of the present paper is to relax it. The main new ingredient is  the so-called {\it reverse norm control} random variable    (see (\ref{RNCrandomvariable})  for a precise definition)  which allows to compare in a more flexible way  the behavior of the two sequences $( \Vert A_{n, 1}\Vert)_{n \geq 1}$ and $( \vert A_{n, 1} x\vert)_{n \geq 0}$.

In subsection 1.1,  we fix the notations.    Section \ref{sectionrecurrence} is devoted to a general criterion for  recurrence  of $(X_n)_{n \geq 0}$;  this criterion is based on the properties of the action of products of the matrices $A_n$ on the projective space $\Pd$. In Section \ref{examples} we analyse and  establish these properties in the different models we consider in this paper : the An are successively similarities, rank 1 matrices, invertible strongly irreducible and proximal or non negative matrices. Section \ref{sectionfluctuationsRk1} is devoted to the study of the fluctuations of the process  $(\ln |A_{n, 1}v|)_{n \geq 1 } $ in the rank one case.
In Section \ref{appendix} we explain  how, in the invertible case,  one  can still obtain the recurrence of the process without  the proximality assumption of the action of $A_n$ on $\Pd$; this requires  a slight extension of the main result of Section  \ref{sectionrecurrence} (see Criterion \ref{thm-from-RCN-to-cons-rep}).

\subsection{ Tools and notations}\label{toolsandnotations}
The  recurrence  of $(X_n)_{n \geq 0}$ is closely related to the contraction properties of elements of the semi-group generated by the support of $ \bar \mu  $. Assume    $\mathbb E(\ln^+ \Vert A_1\Vert)<+\infty$ and  let $\gamma_{\bar \mu}$ be   the Lyapunov exponent  of the random walk  $(A_{n, 1})_{n \geq 1}$:
\[
\gamma_{\bar \mu}:= \lim_{n \to +\infty} {1\over n} \mathbb E( \ln \Vert A_n\cdots A_1\Vert)\in[-\infty,+\infty).
\]
Kingman's subadditive ergodic theorem ensures that this limit holds almost surely
\begin{equation}\label{eq-lyap}
	\gamma_{\bar \mu}:= \lim_{n \to +\infty} {1\over n}  \ln \Vert A_n\cdots A_1\Vert\quad\PP-\mbox{a.s.}
\end{equation}
From now on,  we assume $\gamma_{ \bar \mu  }=0$.

Let ${\bf P}(\mathbb R^d)$ denote the projective space of $\mathbb R^d$,  that is the quotient space $\mathbb R^{d}\setminus  \{0\}\slash \sim$ where $x\sim y$ means that there exists $\lambda \neq 0$ such that $y= \lambda x$. We denote by $\bar x\in\Pd$ the class of the vector $x\in\RR^d$.

 For any non nul vectors $x$ and $y$ in $\mathbb R^d$, we note $x \wedge y$ their exterior product. The space ${\bf P}(\mathbb R^d)$ is endowed with the distance $\delta$ defined by: for any $ \bar x, \bar y \in  {\bf P}(\mathbb R^d)$,  
 $$\delta(\bar x, \bar y):=
\frac{ \vert x\wedge y\vert } {\vert x\vert \ \vert y\vert}=\vert \sin(\widehat{xy})\vert $$ where $x$ and $y$ are (any) representatives of $\bar x$ and $\bar y$ respectively and $\widehat{xy}$  the angle  between  $x$ and $y$. 

The projective action of an  invertible matrix $A$ on ${\bf P}(\mathbb R^d)$ is defined by
$A\cdot \bar x =  \overline{Ax}$ for any $\bar x \in{\bf P}(\mathbb R^d)$ and any representative $x \neq 0$ of the element $\bar x$.  
Since the matrices we consider here are not necessarily invertible, it is convenient to  introduce  the space $\mathcal X:={\bf P}(\RR^d)\cup \{0\}$ 
and  to define the ``projective" action of elements of $\mathcal M(d, \mathbb R)$ on $\mathcal X$ as follows: for any $A \in \mathcal M(d, \mathbb R)$, 
  $  \ A\cdot 0=0$ and,  for any  $\bar x \in{\bf P}(\mathbb R^d)$,
\[
A\cdot \overline{x}= \left\{
\begin{array}{cll} 
	 \overline{Ax}& {\rm when} & A x \neq 0,\\
	0& {\rm when} & Ax=0,
\end{array}\right.\] where
 $x\neq 0 $ is  an  arbitrary representative  of $\bar x$.

Observe that the action of $\cM(d,\RR)$ on $\mathcal X$ is measurable but  not continuous.

 In order to analyze the linear action of  $\cM(d,\RR)$ on $\RR^d$, we also consider  the quantity $\vert A\bar x\vert$ defined by: 
 \[
\vert A \overline{x}\vert = \left\{
\begin{array}{cll} 
	 \displaystyle {\vert Ax\vert \over \vert x\vert}& {\rm when} &   \bar x \in{\bf P}(\mathbb R^d) \quad (\text{where $x\neq 0$ is an arbitrary representative of}\ \bar x),\\
	0& {\rm when} &  \bar x=0.
\end{array}\right.
\]
Notice that this quantity is well defined since $ \displaystyle {\vert Ax\vert \over \vert x\vert}$ does not depend on the representative $x \neq 0$ of $\bar x$, when $ \bar x \in{\bf P}(\mathbb R^d)$.

 Let $g_n = (A_n, B_n),n = 1,2,...$ be independent and identically distributed random variables defined on the probability space  $(\Omega, \mathcal T, \mathbb P)$ with   distribution $\mu$ on $\mathcal M(d, \mathbb R)\times \mathbb R^d$.

 Let $(\overline{V}_n)_{n \geq 0}$ be the process on  $\mathcal X$ defined  by \[
\overline{V}_{n+1} = A_{n+1}\cdot \overline{V}_n
\] for any $n \geq 0$.  
This process is a Markov chain on $\mathcal X$ with transition probability kernel $\overline P$ defined by: for any  bounded Borel function $\varphi: \mathcal X\to \mathbb R $  and any $\bar v \in \mathcal X$, 
\[
\Pb\varphi(\bar v)= \int _{\mathcal M(d, \mathbb R)} \varphi(A\cdot \bar v)  \bar \mu  ({\rm d}A)
\]
where $\bar \mu$ is the projection on $\cM(d, \RR)$ of the probability measure $\mu $ on $\cM(d,\RR)\times \RR^d.$

Under several type of conditions,  it is possible to assume that  the Markov chain $(\overline{V}_n)_{n \geq 0}$ admits at least one invariant measure $\nu$ on $\mathcal X$ whose  support is contained in ${\bf P}(\RR^d)$: it holds for instance when the random matrices $A_n$ are invertible or when their entries are all positive. We explore here also other situations. 
Further criteria for the uniqueness of $\nu$ do exist, we refer to \cite{BL} and references therein.

{\bf Throughout the  paper, we fix an invariant measure $\nu$ for the Markov chain $(\overline{V}_n)_{n \geq 0}$  with support included in  ${\bf P}(\RR^d)$ and  assume that the distribution of $\overline{V}_0$ equals $\nu$  and that $\overline{V}_0$ is independent of the sequence $(A_n, B_n)_{n \geq 1}$.}

 For any $\bar v \in \Pd$ and $n \geq 1$,  it holds $\vert A_{n,1}\bar v\vert \leq \Vert A_{n, 1} \Vert$.  In order to control more precisely  the behavior of the sequences $(\vert A_n\cdots A_1\bar v\vert)_{n\geq1}$, we introduce the following random quantity, that we call the {\it reverse norm control} (RNC) random variable, defined by: for any  $\bar v \in {\bf P}(\RR^d)$ and $k \geq 1$, 
\begin{equation}\label{RNCrandomvariable}
C_{ k}(\bar v):= \sup_{n \geq k} {\Vert A_n\cdots A_{k}\Vert \over 
\vert A_n\cdots A_{k} \bar v\vert }\in [1, +\infty]
\end{equation}
with the convention ${0\over 0} = 1$ and ${c\over 0}=+\infty$ for any $c>0$. Notice that 
	$$\vert A_n \cdots A_k \bar v\vert \leq \Vert A_n \cdots A_k\Vert \leq C_{ k}(\bar v) \  \vert A_n \cdots A_k \bar v\vert.$$

  In several classical situations, this RNC random variable is $\mathbb P$-a.s. finite. For instance, by \cite[Proposition III.3.2]{BL} and   under quite general assumptions,  this is the case  when the $A_n$ are invertible ; this   property also   holds for  non negative matrices $A_n$  when $v$ has positive entries.

 We end this paragraph with the following definition which plays an important role in the sequel of the present paper. 

 \begin{defi}  Let $\nu$ be an invariant probability measure for the Markov chain $(\overline{V}_n)_{n \geq 0}$ on ${\bf P}(\RR^d)$ and assume that $\overline{V}_0$ has distribution $\nu$. 
 
 One says that the sequence $(A_n)_{n \geq 1}$ satisfies the  \underline{reverse norm control property of order $\beta >0$}  relatively to the measure  $\nu$   if  
 $$
 \mathbb E\left((\ln^+ C_{ 1}(\overline{V}_0))^\beta\right)<+\infty.
 $$
 \end{defi}

\section{A criterion for  recurrence} \label{sectionrecurrence}
The following classical result guarantees that the affine recursion $(X_n)_{n \geq 0}$ is recurrent in the contractive case. 
\begin{fact} \label{fact-cont-case}Assume $\gamma_{\bar \mu} <0$  and $\mathbb E(\ln^+\vert B_1\vert)<+\infty$. Then,   the Markov chain  $(X_n)_{n \geq 1}$  is recurrent on $\mathbb R^d$.
\end{fact}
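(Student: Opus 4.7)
The plan is to establish the classical stationary-solution construction in the contractive regime and then couple every trajectory to it.

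First, I would iterate the recursion to write
\[
X_n^x = A_n\cdots A_1\, x + \sum_{k=1}^{n} A_n\cdots A_{k+1} B_k.
\]
Since $\gamma_{\bar\mu}<0$, the Kingman formula \eqref{eq-lyap} provides $\rho\in(\gamma_{\bar\mu},0)$ such that $\|A_n\cdots A_1\|\le e^{n\rho}$ eventually, almost surely; in particular $A_n\cdots A_1 x\to 0$ a.s.  Next I would handle the perturbation sum. By a standard Borel--Cantelli argument using $\mathbb{E}(\ln^+|B_1|)<+\infty$ one gets $\ln^+|B_k|=o(k)$ a.s., and combined with the exponential decay of $\|A_1\cdots A_{k-1}\|$ this shows that the \emph{backward} series
\[
\widehat R_\infty := \sum_{k=1}^{\infty} A_1\cdots A_{k-1} B_k
\]
converges absolutely almost surely.

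Then I would exploit the reversal-of-time trick: for each fixed $n$, the tuple $(A_n,\dots,A_1,B_n,\dots,B_1)$ has the same law as $(A_1,\dots,A_n,B_1,\dots,B_n)$, so the forward remainder $R_n=\sum_{k=1}^n A_n\cdots A_{k+1}B_k$ has the same distribution as $\widehat R_n=\sum_{k=1}^n A_1\cdots A_{k-1}B_k$. Consequently $X_n^x$ converges in distribution to the law $\pi$ of $\widehat R_\infty$, and $\pi$ is an invariant probability measure for the chain.

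To upgrade weak convergence into pathwise recurrence (this is the step that requires actual care), I would extend the i.i.d.\ family to $(A_k,B_k)_{k\in\mathbb{Z}}$ on an enlarged probability space and set
\[
\widetilde X_n := \sum_{k\le n} A_n A_{n-1}\cdots A_{k+1} B_k,
\]
whose a.s.\ convergence follows exactly as for $\widehat R_\infty$.  The process $(\widetilde X_n)_{n\in\mathbb Z}$ is strictly stationary with marginal $\pi$ and satisfies the affine recursion, while the coupling identity
\[
|X_n^x - \widetilde X_n| \le \|A_n\cdots A_1\|\cdot |x - \widetilde X_0| \longrightarrow 0\quad \text{a.s.}
\]
ties every trajectory to the stationary one.

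Finally, choosing $K>0$ with $\pi(\{|y|\le K\})>0$, Poincaré recurrence applied to the shift-invariant stationary sequence $(\widetilde X_n)$ gives $\liminf_{n\to\infty} |\widetilde X_n|\le K$ almost surely, and the coupling estimate transfers this to $\liminf_n |X_n^x|\le K$ a.s.\ for every $x\in\mathbb{R}^d$. The main delicate point is really the a.s.\ convergence of the backward series—once the moment condition on $B_1$ and the strict negativity of $\gamma_{\bar\mu}$ are combined through Borel--Cantelli, everything else is essentially mechanical.
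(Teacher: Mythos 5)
Your proof is correct, but it takes a genuinely different route than the paper's. Both start from the same iteration and the same backward series $\widehat R_\infty = \sum_{k\ge1} A_1\cdots A_{k-1}B_k$, whose almost sure absolute convergence is obtained from $\gamma_{\bar\mu}<0$ together with $\mathbb{E}(\ln^+|B_1|)<\infty$ (you via Borel--Cantelli, the paper via the root test). The divergence is in the pathwise upgrade: the paper observes that $L:=\liminf_n|X_n^x|$ does not depend on $x$ and is independent of every $\mathcal F_m$, so Kolmogorov's $0$--$1$ law makes $L$ a.s.\ constant; then the distributional identity $X_n^0\overset{d}{=}\overrightarrow X_n\to\overrightarrow X_\infty$ a.s.\ shows $\mathbb P(L\le K)>0$, hence $=1$. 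You instead extend the noise to a two-sided i.i.d.\ sequence, build the strictly stationary solution $\widetilde X_n$, couple every trajectory to it via $|X_n^x-\widetilde X_n|\le\|A_n\cdots A_1\|\,|x-\widetilde X_0|\to0$, and then argue recurrence of the stationary process. Your route is more structural --- it in fact produces a stationary probability $\pi$ for the chain, hence a stronger statement than bare topological recurrence --- at the cost of the two-sided extension and an ergodic-theoretic input; the paper's route is more elementary and stays on the one-sided sequence.

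One step needs sharpening. Plain Poincar\'e recurrence applied to the shift and the set $\{|\widetilde X_0|\le K\}$ only yields $\liminf_n|\widetilde X_n|\le K$ almost surely \emph{on} that set of positive measure, not on the whole space. To conclude as you want, note that the event $E=\{\liminf_n|\widetilde X_n|\le K\}$ is invariant under the shift on $(A_k,B_k)_{k\in\ZZ}$ (since $\widetilde X_n\circ\theta=\widetilde X_{n+1}$), and the Bernoulli shift is ergodic, so $\mathbb P(E)\in\{0,1\}$; since $\mathbb P(E)\ge\mathbb P(|\widetilde X_0|\le K)>0$, we get $\mathbb P(E)=1$, and the coupling then transfers this to every $x$. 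This is exactly where the paper reaches for the Kolmogorov $0$--$1$ law, so the two proofs ultimately rely on the same type of $0$--$1$ input; you just need to make it explicit rather than attribute it to Poincar\'e alone.
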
 
For reader convenience we present here a short proof.
\begin{proof} 	Observe that for any $m\in \NN$ and $x,y\in\RR^d$,
	\begin{align*}
		\lim_{n \to +\infty}\vert g_n\circ\cdots \circ g_1(x) -g_n\circ\cdots \circ g_m(y)\vert &= \lim_{n \to +\infty}\vert A_n\cdots A_m (X_{m-1}^x-y)\vert \\&\leq  \lim_{n \to +\infty}\Vert A_n\cdots A_m \Vert\  \vert X_{m-1}^x-y\vert  
		=0 \quad \mathbb P-a.s.
	\end{align*}
	Hence, the random variable $L:=\liminf_{n \to +\infty}\vert X_n^x\vert \in[0,+\infty]$ does not depend $x$; furthermore, for all $m\in\NN$,  it is independent of  the $\sigma$-algebra $\mathcal F_m$ generated by the variables  $g_1, \ \ldots \,  g_m$. By Kolmogorov's  $0-1$ law,  this random variable $L$ is constant $\mathbb P$-a.s. 
	To prove the recurrence of $(X_n^x)_{n \geq 0}$, one just needs to check  that $L$ is not infinite with positive probability. 
	
	 By an easy induction using the definition of the affine recursion (\ref{eq_def_X_n}), we may write:  
	for any $n \geq 1$ and $x \in \mathbb R^d$,
	\[
	X_n^x= A_{n, 1} x +B_{n, 1}
	\]
  where $
		A_{n, 1}=A_n\cdots A_1$ and   $B_{n, 1} =\displaystyle \sum_{k=1}^n A_n \cdots A_{k+1} B_k$  (with the convention  $A_n \cdots A_{n+1} =I$). 
	
	Since $ \EE(\ln^+\vert B_1\vert)<+\infty$ and  $\gamma_{\bar \mu}<0$,  by (\ref{eq-lyap})  it holds
	\begin{equation} \label{kjzqbchsdvkz}
		\limsup _{k \to +\infty}\vert A_1\cdots A_{k-1} B_{k}\vert^{1/k}= e^{\gamma_{\bar \mu}}<1 \qquad \mathbb P\text{-a.s.}
	\end{equation}
	 Consequently,   the series $(\overrightarrow{X}_n)_{n \geq 0}$, with   $\overrightarrow{X}_n:=\displaystyle  \sum_{k = 1}^{n} A_1\cdots A_{k-1} B_{k}$ (corresponding to the  action of the right products $g_1\cdots g_n$  of  the transformations $g_k$), is $\mathbb P$-a.s. absolutely convergent to some random variable $\overrightarrow{X}_\infty$ with values  in $\mathbb R^d$.
	Since the random variables $\overrightarrow{X}_n$ and $X_n^0=B_{n,1} $ have the same distribution,
 it yields, for $K>0$,
	\begin{align*}
		\PP(\liminf_{n\to+\infty}\vert X_{n}^0\vert \leq K) &
		\geq \PP\left(\bigcap_{N\geq 1} \bigcup_{n\geq N}(\vert X_{n}^0\vert \leq K)\right) 
		\\
		&=\lim_{N\to +\infty }\PP\left( \bigcup_{n\geq N}(\vert X_{n}^0\vert \leq K)\right) \\
		&\geq  \limsup_{N\to +\infty }\PP( \vert X_N^0\vert \leq K)=  \limsup_{N\to +\infty }\PP( \vert \overrightarrow{X}_N\vert \leq K)= \PP(\vert \overrightarrow{X}_\infty\vert \leq K) 
	\end{align*}
	 
as soon as $K$ does not belong to the denumerable set of atoms of the distribution of $\vert \overrightarrow{X}_\infty\vert $.  
For such a $K$ large enough, the last term is   positive,  hence  $ \displaystyle \liminf_{n\to+\infty}\vert X_{n}^0\vert <+\infty \ \mathbb P$-a.s.

\end{proof}

{\bf Throughout this  paper,  we focus on the case when the Lyapunov exponent $\gamma_{\bar \mu}$  equals 0}.

In the critical case, i.e. $\gamma_{\bar \mu}= 0$, this argument no longer works and  and should be modified accordingly.

From now on, we  fix   $\rho \in (0, 1)$ and, for any  $\bar v \in \Pd$,  we consider the random variable $ \ell_1^{(\rho)}(\bar v)$ defined by
$$ 
 \ell_1^{(\rho)}(\bar v):= \inf \{n \geq 1 \mid  \vert A_n \cdots A_1\bar v\vert \leq \rho\} \in \mathbb N \cup \{+\infty\}.
$$
Notice that  the  $ \ell_1^{(\rho)}(\bar v)$  are stopping times with respect to the   natural filtration associated with the sequence $(A_n)_{n \geq1}$.

Our main criterion is the following statement.
\begin{criterion}\label{thm-from-RCN-to-cons}
  Let $\nu$ be a $\mub$-invariant probability measure on $\Pd$  and   $ \overline{V}_0$   a random variable  with distribution   $\nu$ and independent of the sequence $(A_n, B_n)_{n \geq 1}$. 		
Assume that   there exist   constants  $\alpha >0$, $   \beta >1 + {1\over \alpha}$  and $\gamma\geq \max\{{1\over \alpha},1\} $ such that the following hypotheses hold:

 ${\bf A}_1(\alpha)$-$\quad 
\mathbb E\left(\left(\ell_1^{(\rho)}(\overline{V}_0)\right)^\alpha\right)<+\infty $ for any $\rho \in (0, 1)$.

 ${\bf A}_2 (\beta) $-$  \quad   
 \mathbb E\left((\ln^+ C_{ 1}(\overline{V}_0))^\beta\right)<+\infty. \quad $ {\it (Reverse norm control property)}

 ${\bf B} (\gamma)$-$\quad   
\mathbb E\left((\ln^+\vert B_1\vert)^{\gamma}\right)<+\infty.
$

\noindent Then, for any $x \in \mathbb R^d$,   the chain $(X_n^x)_{n \geq 0}$ is recurrent.
\end{criterion}

 We will see that typically  $A_1(\alpha)$ hold for any $\alpha< {1\over 2}$. Thus  recurrence follows when  $\beta>3$ and $\gamma>2$.

The random  variables $C_1(\bar v)$ are really relevant in dimension $d \geq 2$. Indeed,  in dimension $d=1$, the variable $C_1(\bar v)$  equals 1.

\noindent {\bf Proof of criterion \ref{thm-from-RCN-to-cons}}.
We follow the  strategy  developed by L. Elie in \cite{E}. 
Fix $\rho\in (0,1)$ such that $\ln \rho <- \EE \left(\ln  C_{ 1}(\overline{V}_0)\right)$, this is possible  since $\beta>1$ and  hypothesis ${\bf A}_2 (\beta)$ yields  $\mathbb E(\ln^+ C_{ 1}(\overline{V}_0))<+\infty$.

Throughout the present proof, let $(\overline{V}^{(k)})_{k \geq 0}$ be a sequence of independent and identically distributed random variables  with distribution $\nu$ and assume that this sequence is independent of $(A_n, B_n)_{n \geq 1}$. We set $\ell_0=0$
and, for any $k \geq 1$, 
\[
\ell_k =  \inf\{n \geq \ell_{k-1}+1: \vert A_n \cdots A_{\ell_{k-1}+1}  \overline{V}^{(k-1)} \vert \leq \rho\}. 
\]
 Without loss of generality, we may assume $\overline{V}^{(0)}= \overline{V}_0$; let us emphasize that the random variables $\overline{V}_k= A_k\cdots A_1\cdot \overline{V}_0, k \geq 1,$ have also  distribution $\nu$ but that the sequence $(\overline{V}_k)_{k \geq 0}$ is not i.i.d. The interest of the sequence $(\overline{V}^{(k)})_{k \geq 0}$ is to decompose the product $A_{\ell_n}\cdots A_1$ as a product of i.i.d. random variables as explained below. 

Observe that   the sub-process $(X_{\ell_n}^x)_{n \geq 0}$ is a Stochastic Dynamical System defined recursively by   affine transformations. More precisely,
$$
X_{\ell_n}^x=  \widetilde{g}_n \circ \cdots \circ \widetilde{g}_1(x)
$$
with  
$$
\widetilde{g}_k=(\widetilde{A_k}, \widetilde{B_k}):=g_{\ell_{k}}\circ\dots\circ g_{\ell_{k-1}+1}=(A_{\ell_{k}}\dots A_{\ell_{k-1}+1},\sum_{i= \ell_{k-1}+1}^{\ell_{k}} A_{\ell_{k}} \cdots A_{i+1} B_i).
$$
The  random variables $ \widetilde{g}_k, k \geq 1,$ are independent and identically  distributed,     with distribution   $\widetilde{\mu}={\rm dist} (\widetilde{g}_1)$ on $\cM(d,\RR)\times \RR^d$.

Let us now check that  the sub-process $(X_{\ell_n})_{n \geq 0}$ satisfies the hypotheses of Fact \ref{fact-cont-case}. This will prove that  $(X_{\ell_n})_{n \geq 0}$, hence $(X_n)_{n \geq 0}$, is recurrent.

We set $\alpha_k:= \Vert A_{\ell_k }\cdots A_{\ell_{k-1}+1}\Vert$. By construction, the random variables $\alpha_k, k \geq 1,$  are independent and identically distributed. We claim that they are log-integrable and have   negative log-mean.    In fact by definition of $C_{ 1}(\overline{V}_0)$, it holds 
$$
\Vert A_{\ell_1} \cdots A_1\Vert \leq C_{ 1}(\overline{V}_0) \vert A_{\ell_1} \cdots A_1 \overline{V}_0 \vert 
\leq \rho  \ C_{ 1}(\overline{V}_0)
$$ 
so that
\[
\mathbb E(\ln \alpha_1)\leq \ln \rho+ \EE (\ln  C_{ 1}(\overline{V}_0))=\ln \rho+ \EE \left(\ln  C_{ 1}(\overline{V}_0)\right)<0.
\]
Hence,  the Lyapunov exponent of $\widetilde{\mu}$ satisfies
$$\gamma_{\widetilde \mu}= \lim_{n \to +\infty} {1\over n}  \ln \Vert A_{\ell_n, 1}\Vert\leq \lim_{n \to +\infty} \frac{\ln \alpha_1+\cdots + \ln \alpha_n}{n}\leq  \ln \rho+ \EE \left(\ln  C_{ 1}(\overline{V}_0)\right)<0.$$

In order to check that $\widetilde{B_1}=B_{\ell_1,1}$ is log-integrable, we observe that 
\begin{align*}
\vert B_{\ell_1,1}\vert  &\leq
\sum_{i={1}}^{\ell_1} \Vert A_{\ell_1}\cdots A_{i+1}\Vert  \vert B_i\vert \leq \sum_{i=1}^{\ell_1} C_{i+1}(\overline{V}_i) \vert B_i\vert \quad \mathbb P\text{-a.s.}
\end{align*}
Indeed, on the one hand,  the inequalities    $\vert A_{\ell_1}\cdots A_{1}\overline{V}_0\vert\leq \rho$ and $ \vert A_{i}\cdots A_{1}\overline{V}_0\vert >\rho >0$  for $ 1\leq i < \ell_1 $ yield   
 \begin{align*}
 	\Vert A_{\ell_1}\cdots A_{i+1}\Vert
 	&\leq 
 	C_{ i+1}(\overline{V}_i)\vert A_{\ell_1}\cdots A_{i+1} \overline{V}_i \vert
 	\\
 	&= 
 	C_{ i+1}(\overline{V}_i){\vert A_{\ell_1}\cdots A_{1} \overline{V}_0 \vert \over \vert A_{i}\cdots A_{1} \overline{V}_0 \vert}\leq 	C_{ i+1}(\overline{V}_i){\rho \over \rho }= 	C_{ i+1}(\overline{V}_i)
 \end{align*}
 for $1\leq i < \ell_1$.  On the other hand,   
 $$	\Vert A_{\ell_1}\cdots A_{\ell_1+1}\Vert=\vert I\vert =1\leq 	C_{ i+1}(\overline{V}_i).$$
 
Thus
\begin{equation}\label{zkjlbedh}
\ln^+\vert B_{\ell_1,1}\vert  \leq \ln^+\ell_1+\max_{1\leq i \leq \ell_1} \ln^+ C_{ i+1}(\overline{V}_i) +\max_{1\leq i \leq \ell_1} \ln^+ \vert B_i\vert\quad \mathbb P\text{-a.s.}
\end{equation}
Under hypothesis $A_1(\alpha)$, the random variable $ \ln^+\ell_1$ is integrable.
The fact that 
$$\EE(\max_{1\leq i \leq \ell_1} \ln^+ \vert B_i\vert)< +\infty$$ whenever $\textbf{B}(\gamma)$ is satisfied for some $\gamma \geq\max\{{1\over \alpha},1\}$ is a rather classical result that has been used by several authors (see for instance  \cite[Proposition 4]{CKW}). 
However this argument  uses deeply the fact that the $B_i$ are independent;  it cannot be applied to the sequence $(C_{ i+1}(\overline{V}_i))_{i \geq 0}$ which  is stationary but not independent.  To deal with the second term in the right hand side of  (\ref{zkjlbedh}), we need to use Lemma \ref{lemmedusup}, proved hereafter, which is more general but requires stronger moment. In particular applying this Lemma with $Y_i=  \ln^+ C_{ i+1}(\overline{V}_i)$ and $ \tau=\ell_1$, we can conclude that hypotheses ${\bf A}_1(\alpha)$ and ${\bf A}_2(\beta)$ with $\beta>1+1/\alpha$ ensure that there exists a positive constant $C(\alpha, \beta)$ s.t. 
$$\EE\left(\max_{1\leq i \leq \ell_1} \ln^+ C_{ i+1}(\overline{V}_i)\right)\leq \ C(\alpha,\beta)  \ \mathbb E(\ell_1^\alpha)^{\frac{ \beta -1}{ \beta }}\ \mathbb E  \left(\ln^+ C_1(\overline{V}_0)^\beta\right)^{1/\beta}< +\infty.$$
\rightline{$\Box$}

\begin{lemma}\label{lemmedusup}
Let $(Y_i)_{i\geq 1}$ be a sequence of non negative random variables and $\tau $ a $\mathbb N$-valued random variable, defined on a probability space $(\Omega, \mathcal F, \mathbb P)$.
  Then,  for  $ \alpha >0$ and $ \beta  >\frac{1+\alpha}{\alpha}$, there exists a constant $C= C(\alpha, \beta )$ s.t.
\begin{equation}\label{kjazeyvcg}
\mathbb E\left(\max_{1\leq i\leq \tau} Y_i \right)
\leq 
C\ \EE(\tau^\alpha)^{\frac{ \beta -1}{ \beta }}\ \sup_{i \geq 1} \mathbb E \left(  Y_i ^{   \beta }\right)^{1/ \beta }.
\end{equation}
\end{lemma}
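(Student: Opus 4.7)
The strategy is to obtain a pointwise upper bound on the tail $\mathbb{P}(\max_{1 \leq i \leq \tau} Y_i > s)$ by combining Markov's inequality applied to $\tau^\alpha$ with a union bound controlled by Markov applied to each $Y_i^\beta$, to optimize the truncation level of that union, and then to integrate in $s$. Set $E := \mathbb{E}(\tau^\alpha)$ and $M := \sup_{i \geq 1} \mathbb{E}(Y_i^\beta)$, and assume both are finite (otherwise \eqref{kjazeyvcg} is trivial). I may also assume $\tau \geq 1$ almost surely, and hence $E \geq 1$, which is the situation in the intended application $\tau = \ell_1$; otherwise one replaces $\tau$ by $\tau \vee 1$, which only enlarges $E$ by at most $1$.

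For every integer $N \geq 1$ and every $s > 0$, using
\[
\{\max_{1 \leq i \leq \tau} Y_i > s\} \subset \{\tau > N\} \cup \bigcup_{i=1}^{N} \{Y_i > s\}
\]
together with $\mathbb{P}(\tau > N) \leq E/N^{\alpha}$ and $\mathbb{P}(Y_i > s) \leq M/s^\beta$, I obtain the basic inequality
\[
\mathbb{P}\Big(\max_{1 \leq i \leq \tau} Y_i > s\Big) \;\leq\; \frac{E}{N^\alpha} + \frac{N M}{s^\beta}.
\]
A straightforward calculus minimization over the continuous parameter $N > 0$ attains its minimum at $N_\ast(s) = (\alpha E s^\beta / M)^{1/(\alpha+1)}$; choosing $N = \max\bigl(1, \lfloor N_\ast(s)\rfloor\bigr)$ and combining with the trivial bound $\leq 1$ yields, for some constant $C_0 = C_0(\alpha)$,
\[
\mathbb{P}\Big(\max_{1 \leq i \leq \tau} Y_i > s\Big) \;\leq\; \min\Big(1,\; C_0\, E^{1/(\alpha+1)} M^{\alpha/(\alpha+1)}\, s^{-\alpha\beta/(\alpha+1)}\Big).
\]

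To conclude, I write $\mathbb{E}(\max_{i \leq \tau} Y_i) = \int_0^\infty \mathbb{P}(\max_{i \leq \tau} Y_i > s)\,ds$ and split the integral at the crossover level $s_0$ where the two bounds in the $\min$ coincide, namely $s_0 = (C_0 E^{1/(\alpha+1)} M^{\alpha/(\alpha+1)})^{(\alpha+1)/(\alpha\beta)}$. The tail integral $\int_{s_0}^\infty s^{-\alpha\beta/(\alpha+1)}\,ds$ converges if and only if $\alpha\beta/(\alpha+1) > 1$, which is \emph{exactly} the hypothesis $\beta > (1+\alpha)/\alpha$. A direct computation then shows that both the contribution of $[0, s_0]$ and the contribution of $[s_0, +\infty)$ are of the order of $s_0$, producing
\[
\mathbb{E}\Big(\max_{1 \leq i \leq \tau} Y_i\Big) \;\leq\; C_1(\alpha,\beta)\, s_0 \;=\; C_2(\alpha,\beta)\, E^{1/(\alpha\beta)} M^{1/\beta}.
\]
Finally, $\beta > 1 + 1/\alpha$ rewrites as $1/(\alpha\beta) \leq (\beta-1)/\beta$, so from $E \geq 1$ one deduces $E^{1/(\alpha\beta)} \leq E^{(\beta-1)/\beta}$, which gives \eqref{kjazeyvcg}.

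The main technical nuisance will be handling the integer constraint $N \geq 1$ together with the two regimes (the optimization regime $s > s_0$ and the trivial regime $s \leq s_0$). All other steps are standard; the pleasing structural feature is that the required moment condition on $\beta$ matches exactly the convergence exponent for the tail integral.
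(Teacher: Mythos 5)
Your argument is correct, but it takes a genuinely different route from the paper's. The paper proves the lemma in two lines via the pointwise inequality $\max_{1\leq i\leq\tau}Y_i\leq\bigl(\sup_{i\geq1}Y_i/i^{\delta}\bigr)\,\tau^{\delta}$ followed by H\"older with conjugate exponents $(\beta,q)$ and the crude bound $\mathbb E\bigl(\sup_i Y_i^{\beta}/i^{\delta\beta}\bigr)\leq\sum_i\mathbb E(Y_i^{\beta})/i^{\delta\beta}$; the choice $\delta=\alpha/q$ makes $\delta q=\alpha$ and $\delta\beta=\alpha(\beta-1)$, so the series converges exactly when $\beta>(1+\alpha)/\alpha$ and the exponent $(\beta-1)/\beta=1/q$ on $\mathbb E(\tau^{\alpha})$ falls out immediately, with no auxiliary assumption on $\tau$. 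Your version instead estimates the tail of $\max_{i\leq\tau}Y_i$ by a union bound up to a truncation level $N$, applies Markov both to $\tau^{\alpha}$ and to $Y_i^{\beta}$, optimizes over $N$, and integrates; this is conceptually transparent and in fact yields the \emph{stronger} exponent $1/(\alpha\beta)$ on $\mathbb E(\tau^{\alpha})$, which you then relax to $(\beta-1)/\beta$ using $\mathbb E(\tau^{\alpha})\geq1$. The costs of your route are (a) the integer/positivity constraint on $N$, which forces you to juggle the regimes $N_\ast<1$, $1\leq N_\ast<2$, $N_\ast\geq2$ and to enlarge $C_0$ accordingly, and (b) the explicit reliance on $\mathbb E(\tau^{\alpha})\geq1$ (i.e.\ $\tau\geq1$ a.s.), which the paper's proof does not need; since the lemma is only invoked with $\tau=\ell_1\geq1$ this is harmless, but it is a genuine extra hypothesis relative to the stated lemma. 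Both arguments pinpoint the same threshold $\beta>(1+\alpha)/\alpha$ — in the paper it is the convergence exponent of $\sum i^{-\alpha(\beta-1)}$, in yours the convergence exponent of $\int s^{-\alpha\beta/(\alpha+1)}\,ds$.
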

Proof.  For any $\delta >0$,
$ \quad \displaystyle 
\left(\max_{1\leq i\leq \tau} Y_i\right) \leq  \max_{1\leq i\leq \tau} \left({Y_i \over i^\delta} \ i^\delta\right) \leq \sup_{i \geq 1}\left( {Y_i \over i^\delta}\right) \ \tau^\delta.
$

\noindent 
Hence, by H\"older inequality,
$\displaystyle 
\mathbb E\left(\left(\max_{1\leq i\leq \tau} Y_i\right) \right)
\leq 
\mathbb E\left( \sup_{i \geq 1}\left( {Y_i^{   \beta }\over i^{\delta  \beta }}\right)\right)^{1\over  \beta } \mathbb E(\tau ^{\delta q})^{1\over q} 
$
where  $q$ satisfies $\displaystyle {1\over  \beta }+{1\over q} = 1$. Hence

\begin{align*}
\mathbb E\left( \sup_{i \geq 1}\left( {Y_i^{   \beta }\over i^{\delta  \beta  }}\right)\right)
&\leq 
\mathbb E\left( \sum_{i \geq 1}  {Y_i^{   \beta  }\over i^{\delta  \beta  }} \right)\leq \sum_{i\geq 1} {\EE( Y_i^{   \beta  }) \over i^{\delta  \beta  }}\quad \leq \sup_{i \geq 1} \mathbb E(Y_i^{   \beta })\times \left(\sum_{i \geq 1} {1\over i^{\delta  \beta  }}\right)
\end{align*}
Take $\delta=\alpha/q=\alpha ( \beta -1)/  \beta $. Then $\delta  \beta = \alpha  ( \beta -1)>1$ if $ \beta >(1+\alpha)/\alpha$.
Inequality (\ref{kjazeyvcg}) follows  with  $C(\alpha,\beta)= \sum_{i \geq 1} {1\over i^{\alpha  (\beta-1)}}< +\infty$.

\rightline{$\Box$}

\section{Application to significant matrix semigroups}\label{examples}
 We present here various situations where the matrices $A_n$ belong to different and important classes of matrix semigroups: similarities, rank 1, invertible  and non negative matrices. In Subsection \ref{hypotheses}, we present   a list of hypotheses in each situations. In subsections \ref{momentladder} and \ref{RNCP}, we explain how these  different conditions imply the recurrence of the process $(X_n)_{n \geq 0}$.
 
 In all these  cases, there  exists a $\bar{\mu}$-invariant probability $\nu$ on $\Pd$  and the  affine recursion is controlled by the 1-dimensional  random walk  $(S_n)_{n \geq 0}$,   with markovian increments, defined by 
		\begin{equation}\label{eq_S_n}
			S_0=0 \quad {\rm and} \quad S_n= S_n(\overline{V}_0):=\ln \vert A_n\cdots A_1 \overline{V}_0\vert =\sum_{k=1}^n\ln\vert A_k\overline{V}_{k_-1}\vert .
	\end{equation}	
The proof of our main theorem relies in particular on  results concerning the fluctuations of this process $(S_n)_{n \geq 0}$ and on the contraction properties  on $\Pd$ of the  random  matrices $A_{n, 1},  n \geq 1$.  These results and properties require different moment  and non degeneracy    assumptions according to the explored situation; 	the formulation of these hypotheses vary from case to case, but all of them   ensure in particular that:   
	\begin{itemize}
		\item The increments of $(S_n)_{n \geq 0} $ have at least \textit{moment} of order $2$, ``from  above" and ``from below". 
		The control ``from above" can be seen in the fact  that $\ln^+ \vert A_1\vert $ has at least moment of  order  $ 2$. The one ``from below" is reflected in the conditions called ``reverse moments" which ensure that the random variable  
		$${\mathfrak n} (A_1, \overline{V}_0):=  \Vert       A_1 \Vert / \vert A_1\overline{V}_0\vert
		\ \in [1, +\infty]$$
		is sufficiently integrable; it implies in particular that   the quantity $\vert A_1\overline{V}_0\vert$ is not too close to zero.	
		\item  The Markov walk $(S_n)_{n \geq 0} $ is {\it non degenerate}, i.e. its variance  $\sigma^2:=\lim_{n\to\infty} \mathrm{Var}(S_n)/n $ is non zero. In particular,  the process $(S_n)_{n \geq 0} $ is unbounded.
		\item The projective action of the matrices $A_{n, 1}$ has nice ``contraction" properties (except in case  {\bf S},  which in some sense a $1$-dimensional situation)
		; more precisely,  there exists $\rho<\in [0, 1)$ and $c>0$ such that	 	\begin{equation} \label{eq_cont-P}
	\forall \bar u,\bar v \in \Pd, \forall n \geq 1  \qquad 		\mathbb E( \delta(A_{n, 1}\cdot \bar u, A_{n, 1}\cdot \bar v) ) \leq c\ \rho^n.
		\end{equation}
	\end{itemize}

\vspace{5mm} 

\subsection{Hypotheses}\label{hypotheses}

\subsubsection{\bf Case   {\bf S}: the $A_n$ are  similarities}

We assume here   that $A_n= a_n R_n$ with $a_n \in \mathbb R^{*+}$ and $R_n \in O(d, \mathbb R)$. 

In this case,  the uniform distribution on $\Pd$ is $\bar \mu$-invariant and $\displaystyle S_n:=\ln \vert A_n\cdots A_1 \overline{V}_0\vert =\sum_{k=1}^n\ln a_k$, for any initial direction $\overline{V}_0$. 
This is the simplest of our example since it is  controlled by the classical 1-dimensional random walk on $\mathbb R$  with i.i.d increments $\ln a_n$.

We consider the following hypotheses.

\noindent  \underline{{\it Hypotheses}  {\bf S }}

 {\underline{ {\bf S}-{\it Moments} } : $\mathbb E(( \ln a_n)^2) <+\infty$.

   {\underline{ {\bf S}-{\it Non degeneracy} } : $  V(\ln a_1) >0$.

   {\underline{ {\bf S}-{\it Centring}} :  $\mathbb E( \ln a_n)= 0$

\vspace{5mm}

\subsubsection{\bf  Case   {\bf Rk1}: the $A_n$ are rank $1$   matrices}
 
 We assume here that  there exists a  sequence $ (\tilde w_n, w_n, a_n)_{n \geq 1}$  of i.i.d. random variables with values in $\Sd\times \Sd\times \mathbb R^{*+}$ such that $\mathrm{Im}(A_n)= \RR w_n$, $\mathrm{Ker} (A_n)=\tilde{w}_n^\perp$ and $\vert A_n\vert =a_n$. In other words, 
\[\forall v \in \mathbb R^d, \quad A_nv= a_n \langle \tilde w_n, v\rangle w_ n
\]
where $\langle \cdot,\cdot\rangle$ denotes the  standard scalar product in $\RR^d$. In particular
\begin{equation}\label{eq-prod-Rk1}
	A_{n,1}v=a_n\cdots a_1 \langle \tilde w_n, w_{n-1}\rangle\cdots \langle \tilde w_1, v\rangle w_n.
\end{equation}
Suppose  that  $\PP(\langle \tilde w_{n+1},w_n\rangle=0)=0$. Then, the  distribution $\nu$ of the random  variables $\overline{w}_n$ is  a $\bar \mu$-invariant probability measure   on $\Pd$.  Indeed, let $w_0$  be  a $\Sd$-valued random variable  with distribution $\nu$ and  independent of $ (\tilde w_n, w_n, a_n)_{n \geq 1}$ ;   then  $\overline{V}_0=\overline{w}_0, 
  \overline{V}_n=A_{n,1}\cdot \overline{w_0}=\overline{w_n} $ for $n \geq 1$ and $ \overline{V}_n$    has  also distribution $\nu$.
Since the former equality holds for any starting direction $\overline{w'_0}$ such that $\PP( \langle \tilde w_1, w'_{0}\rangle=0)=0$, it also readily  implies that $\nu$ is the unique $\bar \mu$-invariant probability measure on ${\bf P}(\RR^d)$.

The 1-dimensional  random walk $(S_n)_{n \geq 0}$  which controls the behavior of the affine recursion  (\ref{eq-prod-Rk1}) can be written as,  \begin{equation}\label{eq-Sn_Rk1}
	S_n(\overline{V}_0)=\ln  \vert A_{n,1}\overline{V}_0\vert=\sum_{k = 1}^n\ln a_k\vert\langle\tilde{w}_k,w_{k-1}\rangle\vert.
\end{equation}

This is a Markov random walk with stationary increments $Y_k:=\ln a_k \vert \langle\tilde{w}_k,w_{k-1}\rangle\vert, k \geq 1$. Observe that $Y_k$ and $Y_{k+1}$ are not independent but $Y_k$ and $Y_\ell$ are independent as soon as $\vert k-\ell\vert \geq 2$.
The ergodic theorem ensures  that $ (S_n(\overline{V}_0)/n)_n$ converges $\mathbb P$-a.s. to $\EE(Y_1)$, this yields 
$$\gamma_{\bar \mu}=\EE(Y_1)=\EE(\ln |a_1\langle\tilde{w}_1,w_{0}\rangle|).$$
Notice at last that   ${\mathfrak n} (A_1, \overline{V}_0):=  \Vert       A_1 \Vert / \vert A_1\overline{V}_0\vert=1/\vert \langle\tilde{w}_1,w_0\ le\vert $.

We introduce the following assumptions.

\noindent \underline{\it Hypotheses  {\bf Rk1}  }

\underline{{\bf  Rk1}-{\it Moments}} :  {\it there exists  $\varepsilon>0$ 
	such that  }$\mathbb E\left(\vert \ln a_1 \vert^{2+\varepsilon}\right) <+\infty.$

 \underline{{\bf Rk1}-{\it Reverse moments}} : {\it there exists  $\varepsilon>0$ 
	such that  }$\mathbb E\left( \left\vert\ln \vert \langle \tilde w_1, w_0\rangle\vert \right\vert ^{3+\varepsilon}\right) <+\infty.$

\underline{{\bf  Rk1}-{\it Non degeneracy}}  :   $\sigma^2=V(Y_2) +2 {\rm cov} (Y_1, Y_2) >0$, 
 {\it with} $Y_2:= \ln a_2+\ln \vert \langle \tilde w_2,  w_1\rangle \vert $ {\it and} 
 $Y_1:= \ln a_1+\ln \vert \langle \tilde w_1,  w_0\rangle \vert.$

 \underline{{\bf Rk1}-{\it Centring}} :  $\mathbb E(\ln a_1)+\mathbb E( \ln \vert\langle\tilde w_1, w_0\rangle \vert)=0$.
}}

 At  first glance, this rank 1 situation may seem a little artificial, but it's not the case.  Indeed,  when the $A_n$ are either invertible or non negative   (see  the cases {\bf I} and {\bf Nn} below),  the closure in $\mathcal M(d, \mathbb R)$ of the semi-group   generated by the support of $\bar \mu$ contains a rank one matrix. More precisely, 
if the  Lyapunov exponent $\gamma_{\bar \mu}$ equals 0,  the sequence $(A_{n,1})_{n \geq 0}$ is recurrent in $\mathcal M(d, \mathbb R)$ and,   under standard hypothesis, its cluster points either equal $0$ or   have rank 1.  Furthermore, the  rank 1 situation explored here   is  a handy toy model  to understand how to extend results obtained  for products of invertible matrices to other  semigroups of   matrices.

}

\vspace{5mm}

\subsubsection{\bf  Case   {\bf I} : the $A_n$ are invertible}\label{sect-I}

We assume here  that the random variables  $ A_n, n \geq 1, $ are ${\rm Gl}(d,  \mathbb R)$-valued  where  ${\rm Gl}(d,  \mathbb R)$ equals the set of $d\times d$ invertible matrices with real coefficients. We assume $d\geq 2$, the case $d=1$ corresponds to a scalar affine recursion (which can be viewed as a particular case of {\bf S})  and is treated under the additional non-degeneracy condition.

Product of random  matrices in ${\rm Gl}(d, \RR)$ have been widely studied. We refer in particular to the works of  Guivarch  \cite{G} and Le Page \cite{L}   and to the books of Bougerol and Lacroix \cite{BL} or  Benoist and Quint \cite{BQ}.

Following \cite{BL}, we now introduce   the following   hypotheses:

\noindent \underline{\it Hypotheses {\bf   I }}

\underline{{\bf I}-{\it (Exponential) Moments}} :  {\it there exists  $\varepsilon>0$ 
	such that  } $\mathbb E( \Vert A_1 \Vert ^{\varepsilon}) <+\infty.$   
	 
{\underline{{\bf I}-{\it (Exponential) Reverse moments}} : {\it there exists  $\varepsilon>0$ 
	such that} 
$\mathbb E\left( \Vert A_1^{-1}\Vert ^{\varepsilon}\ \right) <+\infty.$}

 \underline{{\bf I}-{\it Proximality}} :   {\it the semigroup $T_{\overline{\mu}}$ generated by the support of $\overline{\mu}$ contains a proximal element, that is a  matrix $A$ which admits an eigenvalue $\lambda \in \mathbb R$ which has multiplicity one and whose all other eigenvalues have modulus $<\vert\lambda\vert$.} 

If $A \in T_{\overline{\mu}}$ is proximal, then the sequence   $(A^{2n}/\Vert A^{2n}\Vert)_{n \geq 1}$ converges to a rank 1 matrix. 
 		
 \underline{{\bf I}-{\it Strong irreducibility}} : {\it there exists no proper finite union of  subspaces of  $\mathbb R^d$ which  is
	invariant with respect to all elements of  $T_{\overline{\mu}}$.} 

 \underline{{\bf I}-{\it Centring}} :  $\gamma_{\bar \mu}=0$.

 Before presenting the last example considered in this article, let us make  a few comments on the assumptions {\bf I}.
	
 First,  as announced in the introduction,  notice that   by \cite[Lemma 4.13]{BQ}  assumption {\bf I}-{\it Proximality} is always satisfied when considering the action of the $A_n$ on some suitable exterior product $\wedge ^r\mathbb R^d, 1\leq r \leq d$. Therefore, this hypothesis with $r=1$ is not required to get the recurrence of ($X_n)_{n \geq 0}$. As a first step, we add it here to explain how case {\bf I} can be treated as the other cases by using criterion \ref{thm-from-RCN-to-cons}. We refer the reader to section \ref{appendix}
 for an extension of this criterion.

 Secondly, hypotheses {\bf I}-{\it (Exponential) Moments} and {\bf I}-{\it (Exponential) Reverse moments} imply that the random variable ${\mathfrak n} (A_1, \overline{V}_0):=  \Vert       A_1 \Vert / \vert A_1\overline{V}_0\vert\leq \Vert       A_1 \Vert \Vert A_1^{-1}\Vert $ has an exponential moment of order $\varepsilon/2$.

  Forty  years ago, E. le Page \cite{L} proved  that, under hypotheses {\bf I}-{\it (Exponential) Moments and reverse Moments},  {\bf I}-{\it Proximality} and {\bf I}-{\it Strong irreducibility},  for any $\bar v \in {\bf P}(\mathbb R^d)$, the process   $(S_n(v)=\ln|A_{n,1}v|)_{n}$ satisfies a Central Limit Theorem (CLT). He established in particular that the ``asymptotic" variance
 \[
 \sigma^2:= \lim_{n \to +\infty}\frac{\mathbb E(\ln \vert A_{n, 1} x\vert -n \gamma_{ \bar \mu  })^2 }{n}
 \]
 is non zero (see also \cite[Chapter V-8, Theorem 5.1]{BL}).  This corresponds to the  {\it Non degeneracy} condition  in   cases {\bf S} and {\bf Rk1}.  His proof relies on the following ``mean contraction property" :   for $\epsilon \in (0, 1 )$ small enough, there exist  $\rho_\epsilon \in (0, 1)$  and $c_\epsilon>0$ such that, for any $\bar u, \bar v \in \PP(\RR^d)$,
 \begin{equation} \label{meanI}
 	\mathbb E( \delta^\epsilon(A_{n, 1}\cdot \bar u, A_{n, 1}\cdot \bar v) ) \leq c_\epsilon\ \rho_\epsilon^n\ \delta^\epsilon (\bar u, \bar v).
 \end{equation}
 (see \cite[Chapter V-2, Proposition 2.3]{BL} ). In particular this implies (\ref{eq_cont-P}).
 
 More recent works have significantly relaxed moment hypotheses to prove similar results. In particular  Y.Benoist and J-F. Quint \cite{BQ}  proved the CLT under the optimal second moment condition  $\mathbb E((\ln  \Vert A_1\Vert)^2+(\ln (\Vert A_1^{-1}\Vert)^2)<+\infty$ and a work in progress of A. P\'eneau \cite{Pen23} should guarantee the projective contraction property (\ref{eq_cont-P}) without any  moment assumption. 
 
 In the present paper we still need exponential moments since we rely on results of \cite{GLP} for fluctuation properties of $S_n$. However, it is reasonable to think  that future developments of this theory will ensure that the second order moments are sufficient to obtain recurrence of affine recursion.

\vspace{5mm}

\subsubsection{\bf Case   {\bf Nn} : the $A_n$ are non negative }

Another interesting case that has been widely studied in the last year is the product of matrices with non negative coefficient.
In this case,  the measure $\mu$ is no longer  supported by a group but lives in  the semi-group  $\mathcal S$ of matrices     with nonnegative entries such that    each column contains at least one  positive entry.   Notice that the action  of $T_\mu$ on the full projective space is no longer continuous. However, the   semi-group $\mathcal S$ acts continuously  on the simplex  $\mathbb X$  of nonnegative vectors of norm 1; this action possesses  nice contraction properties, which yield to results similar to the one obtains in the case of  invertible matrices. From a historical point of view, this case was studied first; we refer to the works  by Furstenberg \& Kesten \cite{FK} and Hennion \cite{H}.

 The set $\mathbb X =\SS^{d-1}\cap  (\mathbb R^+)^d $ can be identified with the subset of the projective space $\Pd$  whose elements correspond to half lines in the cone $(\mathbb R^+)^d$. Observe for any $A\in \mathcal S$ and $v\in \mathbb X$, the element $A \cdot v:= A   v/\vert Av\vert$ belongs to $\mathbb X$.  

Recent papers on  random product of nonnegative matrices have proved that is useful in this setting  to endow $\RR^d$ with  the ${\rm L}^1$-norm  $\vert \cdot\vert _1$ and to provide $\XX$ with a special distance ${\mathfrak d}$ that is strongly contracted by the action of the elements of $\mathcal S$ (the reader can find  in \cite[Section 10]{H}   and \cite[Section 2.2]{BPP} 
a precise description of the properties of the distance ${\mathfrak d}$).  In the present paper, in order to  remain  consistent with the other cases, we prefer to use    the euclidean norm $\vert \cdot\vert $ on $(\mathbb R^+)^d$ and the distance $\delta$ on $\mathbb X$. We  can translate known results in the present setting, using the facts that
\begin{itemize}
	\item $ \vert x\vert \leq \vert x\vert _1\leq \sqrt{d} \vert x\vert $ for any $x \in \RR^d$;
	\item  the metrics $\delta$ and ${\mathfrak d}$ satisfies $\delta(\bar u, \bar v)\leq  \vert u-v\vert \leq   2 {\mathfrak d}(u, v )$ for any $u, v \in \mathbb X$ and induce the same
	topologies  on  $\XX$. Nevertheless, $\mathfrak d$ is not equivalent with neither other metrics $\vert \cdot \vert $ and $\delta$ on $\mathbb X$.
\end{itemize}

  For any $A =(A(i, j))_{1 \leq i, j \leq d} \in \mathcal S$,   set   $\displaystyle 
v(A) := \min_{1\leq j\leq d}\Bigl(\sum_{i=1}^d A(i, j)\Bigr). $ This quantity is of interest since $\frac{v(A)}{\sqrt{d}}\  \vert x\vert \leq \vert Ax\vert \leq \Vert         A \Vert\ \vert x\vert$  for any $x \in \mathbb R_{+}^d$, with $v(A) >0$; in particular, 
  $\mathfrak n (A,v)\leq  {\sqrt{d} \Vert A\Vert \over v(A)}.  $ 

Following \cite{Pham}, we now introduce   the following   hypotheses:

\noindent {\bf Hypotheses    Nn }

 \underline{{\bf Nn}-{\it Exponential moments}} :  {\it there exists  $\varepsilon>0$ 
	such that  }$\mathbb E\left(  \Vert A_1 \Vert  ^{\varepsilon}\right) <+\infty.$

 \underline{{\bf Nn}-{\it Reverse moments}} : {\it there exists  $\varepsilon$ 
	such that  }$\mathbb E\left( \left(\ln {\Vert A_1\Vert\over v(A_1)}\right) ^{6+\varepsilon}\right) <+\infty.$

 \underline{{\bf Nn}-{\it Contraction property }} :   {\it There exists $n_0\geq 1$ such that $\mu^{\star n_0}(\mathcal S^+)>0$. }

	 \underline{{\bf Nn}-{\it Irreducibility}} :   {\it There exists no affine subspaces $\mathcal A$ of $\mathbb R ^d$ such that $\mathcal A \cap \mathcal C$ is non empty and bounded and invariant under the action of all elements of the support of $\mu$. }

 \underline{{\bf Nn}-{\it Centring}} :  $\gamma_{\bar \mu}=0$.

The affine recursion generated by non negative matrices  has been investigated in \cite{BPP} under the restrictive condition that the $A_n$ are $\mathcal S_\delta$-valued for some $\delta>0$, where   $\mathcal S_\delta$ denotes the set of matrices $A$ in  $\mathcal S$ such that
$A(i, j)\geq \delta A(k, l)$ for any $1\leq i, j, k, l\leq d.$   Here, we break free  from this   strong assumption.

\vspace{5mm}

Let us now give a few comments on the above hypotheses.

By \cite{H}, under hypotheses {\bf Nn}-{\it Exponential moments}, {\it Reverse moments},  {\it Contraction property} and {\it Irreducibility}, the ``asymptotic" variance
\[
\sigma^2:= \lim_{n \to +\infty}\frac{\mathbb E(\ln \vert A_{n, 1} x\vert -n \gamma_{ \bar \mu  })^2}{n} 
\]
 is non zero (which corresponds to condition {\it Non degeneracy}  in the cases {\bf S} and {\bf Rk1}) and the sequence $\left({1\over \sqrt{n}}(\ln \vert A_{n, 1} x\vert-n\gamma_{\bar \mu})\right)_{n \geq 1}, x \neq 0, $  converges in  distribution  to the normal distribution $\mathcal N(0, \sigma^2)$.  
  
At last,  
under hypothesis {\bf Nn}-{\it Contraction property},  there exist $\rho  \in (0, 1)$  and $c >0$ such that, for any $u, v \in \XX$,
\begin{equation} \label{meanN}
\mathbb E( \mathfrak{d}(A_{n, 1} \cdot u, A_{n, 1} \cdot v) ) \leq c\ \rho ^n\  \mathfrak{d} (u, v).
\end{equation}
This property is of crucial interest  below (see Lemma \ref{lem-cont-RCN}).

\subsection{On the moments of the ladder time $\ell_1^{(\rho)}(\bar v)$}\label{momentladder}
We will now demonstrate that   in the above four situations, there exist positive reals $\alpha$ and $\beta$, with $\beta >1+{1\over \alpha}$ such that    conditions ${\bf A }_1(\alpha)$ and ${\bf A }_2(\beta)$  of criterion \ref{thm-from-RCN-to-cons} hold  are satisfied

 We prove here the following statement concerning fluctuations of random walks.

\begin{prop}\label{zekbghjkd} Under hypotheses  
  {\bf S},  {\bf Rk1}, {\bf I} or  {\bf Nn}, 
the  condition ${\bf A}_1(\alpha)$ holds    for any $\alpha \in (0, 1/2)$.
\end{prop}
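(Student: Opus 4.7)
The plan is to recognize $\ell_1^{(\rho)}(\overline{V}_0)$ as the first passage time of the Markov walk $S_n = \ln|A_n\cdots A_1\overline{V}_0|$ defined in \eqref{eq_S_n} across the negative level $\ln\rho$. Since in each of the four cases the \emph{Centring} assumption gives $\gamma_{\bar\mu}=0$ and the \emph{Non degeneracy} assumption (or its analogue in cases {\bf I} and {\bf Nn}) guarantees a strictly positive asymptotic variance $\sigma^2>0$, the process $(S_n)_{n\geq 0}$ is a centred, non-degenerate random walk (case {\bf S}) or Markov walk (cases {\bf Rk1}, {\bf I}, {\bf Nn}). The classical fluctuation theory for such walks yields a tail of order $n^{-1/2}$ for the first entrance time into a half-line, and this immediately gives the moment bound we need.

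I would proceed case by case. In case {\bf S}, $S_n=\sum_{k=1}^n\ln a_k$ is an ordinary centred i.i.d.\ random walk with finite variance, and the Feller/Spitzer theorem yields $\PP(\ell_1^{(\rho)}>n)\leq C_\rho/\sqrt n$ directly. In case {\bf Rk1}, $S_n$ has $2$-dependent stationary increments $Y_k=\ln a_k|\langle\tilde w_k,w_{k-1}\rangle|$; this is the situation Section~\ref{sectionfluctuationsRk1} is devoted to, and one gets the same $n^{-1/2}$ bound there once the moment and non-degeneracy hypotheses are in force. In cases {\bf I} and {\bf Nn}, the increments $\ln|A_k\bar V_{k-1}|$ form a Markov walk driven by the chain $(\bar V_k)$ on $\Pd$ (resp. on $\XX$); here I would invoke the persistence results of Grama--Le Page--Peign\'e \cite{GLP}, whose hypotheses are covered by the contraction property \eqref{eq_cont-P} (which holds in the form \eqref{meanI} or \eqref{meanN}) together with the exponential moment assumptions {\bf I}-\emph{Moments}, {\bf I}-\emph{Reverse moments} (resp.\ their {\bf Nn}-analogues) and non-degeneracy of $\sigma^2$. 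Each of these references provides the uniform persistence estimate
\[
\PP\bigl(\min_{1\leq k\leq n}S_k\geq \ln\rho\bigr)\leq \frac{C_\rho}{\sqrt n}.
\]

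From the tail bound, the moment estimate is routine: since $\{\ell_1^{(\rho)}>n\}\subset\{\min_{1\leq k\leq n}S_k>\ln\rho\}$, for any $\alpha\in(0,1/2)$,
\[
\EE\bigl((\ell_1^{(\rho)})^\alpha\bigr)=\alpha\int_0^{+\infty}t^{\alpha-1}\PP(\ell_1^{(\rho)}>t)\,dt\leq C+\alpha C_\rho\int_1^{+\infty}t^{\alpha-3/2}\,dt<+\infty,
\]
which yields ${\bf A}_1(\alpha)$.

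The main obstacle is obtaining the uniform $n^{-1/2}$ persistence bound in the Markov-walk cases, because a naive CLT only produces $\PP(S_n\leq\ln\rho)=O(1)$, not a decay of the minimum. In case {\bf Rk1} the subtlety comes from the $2$-block dependence of the increments, which prevents a direct application of Feller's theorem and is precisely what Section~\ref{sectionfluctuationsRk1} addresses, while in cases {\bf I} and {\bf Nn} one must appeal to the more technical fluctuation theory developed in \cite{GLP}, whose applicability rests on the contraction estimate \eqref{eq_cont-P} and on the exponential integrability of $\ln\Vert A_1\Vert$ and of the reverse norm control variable; verifying that these hypotheses are subsumed by {\bf I}-\emph{(Exponential) Moments}/\emph{Reverse moments} (resp.\ {\bf Nn}-\emph{Exponential moments}/\emph{Reverse moments}) is the technical core of the proof.
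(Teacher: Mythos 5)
Your proposal matches the paper's proof: both recognize $\ell_1^{(\rho)}(\overline{V}_0)$ as the first passage time of $S_n=\ln|A_{n,1}\overline{V}_0|$ below $\ln\rho$, invoke the classical fluctuation theory for centred i.i.d./Markov walks (Feller in case \textbf{S}, \cite{GLL} via Section~\ref{sectionfluctuationsRk1} in case \textbf{Rk1}, \cite{GLP} in case \textbf{I}) to obtain the $O(n^{-1/2})$ tail for the first entrance time, and integrate to conclude ${\bf A}_1(\alpha)$ for $\alpha<1/2$. The only minor discrepancy is that for case \textbf{Nn} the paper cites Pham \cite{Pham} (the nonnegative-matrix analogue) rather than \cite{GLP}, but the mechanism you describe is the same.
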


\begin{proof} In the case {\bf S}, the above statement is a consequence of classical results on fluctuations of random walks on $\mathbb R$ for which it is well  known that, for any $\rho >0$ and $\bar v \in {\bf P}(\RR^d)$,  there exists a constant $c_\rho >0$ s.t. $\mathbb P( \ell_1^{(\rho)}(\bar v)>n)\sim  c_\rho/\sqrt{n}$ as $n \to +\infty$ \cite{F}.

Similar results do exist in the case {\bf I}   and {\bf Nn}  by    \cite{GLP} and  \cite{Pham}  respectively (with $v \in \mathbb X$ in the case {\bf Nn}).

The case {\bf Rk1} is  a consequence of a general result of fluctuations of  Markov walks on $\mathbb R$  proved in \cite{GLL};   we explain  in  Section \ref{sectionfluctuationsRk1} how  this general result can be applied to the {\bf Rk1} situation.

\end{proof}

\subsection{On the reverse  norm control property}\label{RNCP}
By the above subsection, the hypothesis ${\bf A}_1(\alpha)$ holds  for any $\alpha \in (0, 1/2)$. We  now check that, for some $\beta >1+{1\over \alpha} > 3$ ,  the RNC (= reverse norm control) property  ${\bf A}_2(\beta)$ holds  under hypotheses  {\bf S}, {\bf Rk1}, {\bf I} or  {\bf Nn}.

 Let us fix  a  basis $\textbf{e}=(e_i)_{i=1,..d}$ of $\RR^d$. There exists a constant $c_{\textbf{e}}$ such that  $\Vert A\Vert  \leq  c_{\textbf{e}}\sum_{i=1}^d \vert A e_i\vert $ for  any $A\in \mathcal M(d, \mathbb R)$; hence, for any $\bar v \in  {\bf P}(\RR^d)$, 
\[
C_1(\bar v) 
:=\sup_{n \geq 1} {\Vert A_{n, 1} \Vert \over 	\vert  A_{n, 1}\bar v\vert } 
\leq c_{\textbf{e}} \sum_{i=1}^d  \sup_{n \geq 1} {\vert A_{n, 1} \bar e_i\vert \over \vert A_{n, 1}\bar v\vert}
=c_{\textbf{e}}\sum_{i=1}^d  C_1(\bar e_i, \bar v)
\]
where, for any $\bar u, \bar v \in {\bf P}(\RR^d)$, we set $\displaystyle C_1(\bar u, \bar v): = \sup_{n \geq 1} {\vert A_{n, 1}\bar u\vert \over \vert A_{n, 1}\bar v\vert}$.
Then, since $\ln^+$ is non decreasing and $\ln^+(ab)\leq \ln^+(a)+\ln^+(b),$
$$ \ln^+C_1(\bar v)\leq \ln^+ c_{\textbf{e}}+  \sum_{i=1}^d \ln^+ C_1(\bar e_i, \bar v).$$

 Hence, the RNC property $A_2(\beta)$    holds as soon as $ \mathbb E((\ln^+ C_1(\bar u, \overline{V}_0))^\beta)<+\infty$  for any  $ u$ in a set of generators of $\mathbf{e}$ of $\RR^d$.

Now, the quantity $\displaystyle {\vert A_{n, 1}\bar u \vert \over \vert A_{n, 1} \bar v \vert}$ may be decomposed as  
$
\displaystyle
   \prod_{k=1}^n {\vert A_{k} \bar u_{k-1} \vert \over \vert A_{k} \overline{v}_{k-1} \vert },
$
with $\bar u_0=\bar u, \overline{v}_0= \bar v$ and $\bar u_{k}=A_k\cdot \bar u_{k-1}, \overline{v}_{k}= A_k\cdot \overline{v}_{k-1}$ for any $k \geq 1$. Hence
\begin{equation}\label{eq-logRNC-dec}
	\ln^+ C_1(\bar u, \bar v)\leq \sum_{k=1}^{+\infty} \ln^+\left({\vert A_k \bar u_{k-1} \vert \over  \vert A_k \overline{v}_{k-1}\vert }\right).
\end{equation}

We will see in the following proposition that this sum converges is an the appropriate $L^p$ spaces in   the different situations presented above. 

Let us  point out  why the reasons of such convergence are different in each of the situations explored here. In the case of similarities  {\bf S},  it holds $\displaystyle {\vert A_n \bar u\vert \over \vert A_n\bar v\vert}=1$ for all $\bar u, \bar v\in\Pd$ thus  all the terms of the series in the right hand side of (\ref{eq-logRNC-dec}) vanish.   In the cases {\bf Rk1}, {\bf I} and {\bf Nn},  the convergence is due to the fact that the distance between $\bar u_k$ and $\overline{V}_k$ goes to zero at exponential speed.

\begin{prop}\label{prop-RCN}  Let $\nu$ be a $\mu$ invariant measure supported by $\SS^{d-1}$ and $\overline{V}_0\sim \nu$ then, under hypotheses {\bf S}, {\bf Rk1}, {\bf I} or {\bf Nn},  the RNC property   $\textbf{A}_2(\beta)$ holds (i.e. $\mathbb E((\ln^+ C_{ 1}(\overline{V}_0))^\beta)<+\infty)$  for $\beta>3$ .
		
\end{prop}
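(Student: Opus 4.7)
The plan is to treat the four cases separately, using the reduction
\[
\ln^+C_1(\bar v)\leq \ln^+c_{\mathbf e}+\sum_{i=1}^d\ln^+C_1(\bar e_i,\bar v)
\]
already derived in the excerpt (with $\mathbf e$ a suitable generating set, taken canonical in case \textbf{Nn} so that $\bar e_i\in\XX$) together with the telescoping bound (\ref{eq-logRNC-dec}). In case \textbf{S}, similarities give $\Vert A_{n,1}\Vert=|A_{n,1}\bar v|$ for every $\bar v$, hence $C_1(\bar v)\equiv 1$. In case \textbf{Rk1}, expanding the product
\[
A_{n,1}x=a_n\cdots a_1\langle\tilde w_1,x\rangle\prod_{k=1}^{n-1}\langle\tilde w_{k+1},w_k\rangle\,w_n
\]
yields $\Vert A_{n,1}\Vert/|A_{n,1}\bar v|=1/|\langle\tilde w_1,v\rangle|$ for every $n\geq 1$ and every unit representative $v$ of $\bar v$. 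Consequently $\ln^+C_1(\overline V_0)=|\ln|\langle\tilde w_1,w_0\rangle||$, and the conclusion for $\beta=3+\varepsilon$ follows immediately from \textbf{Rk1}-\textit{Reverse moments}.

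For case \textbf{I}, I would start from the elementary inequality $\bigl||A\bar u|/|A\bar v|-1\bigr|\leq 2\mathfrak n(A,\bar v)\delta(\bar u,\bar v)$, obtained by dividing $||Au|-|Av||\leq\Vert A\Vert|u-v|\leq\sqrt 2\Vert A\Vert\delta(\bar u,\bar v)$ by $|Av|$ (unit representatives can be chosen so that $|u-v|\leq\sqrt 2\delta(\bar u,\bar v)$), and combine it with the elementary estimate $\ln(1+y)\leq y^\eta/\eta$ valid for $\eta\in(0,1]$ and $y\geq 0$. This produces the pointwise bound
\[
\ln^+\frac{|A_k\bar u_{k-1}|}{|A_k\bar v_{k-1}|}\leq \frac{1}{\eta}\bigl(2\mathfrak n(A_k,\bar v_{k-1})\bigr)^\eta\delta(\bar u_{k-1},\bar v_{k-1})^\eta.
\]
Plugging this into (\ref{eq-logRNC-dec}), applying Minkowski in $L^\beta$ and then Hölder with a conjugate pair $(p,q)$, the problem reduces to choosing $\eta>0$ so small that $\eta\beta p\leq\varepsilon/2$ (so that $\mathfrak n\leq\Vert A_1\Vert\Vert A_1^{-1}\Vert$ is controlled by \textbf{I}-\textit{(Exponential) Moments} and \textit{Reverse Moments}, noting that $\bar v_0\sim\nu$ is independent of $A_1$) and simultaneously $\eta\beta q$ lies in the range where (\ref{meanI}), after integration over $\bar v_0\sim\nu$, gives $\EE[\delta(\bar u_{k-1},\bar v_{k-1})^{\eta\beta q}]\leq c\rho^{k-1}$. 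Summing the resulting geometric series in $k$ then yields the desired finiteness of $\EE[(\ln^+C_1(\bar e_i,\overline V_0))^\beta]$ for any $\beta=3+\varepsilon'$ with $\varepsilon'>0$ small.

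The main obstacle is case \textbf{Nn}. The hypothesis \textbf{Nn}-\textit{Reverse moments} only provides a polynomial moment of $\ln\mathfrak n$ (of order $6+\varepsilon$), and a routine Markov-inequality calculation shows this does \emph{not} yield any positive polynomial moment of $\mathfrak n$, so the trick $\ln(1+y)\leq y^\eta/\eta$ becomes useless in this setting. The remedy is to work with the adapted metric $\mathfrak d$ on the simplex $\XX$: the contraction (\ref{meanN}) holds in $L^1$ (not merely in $L^\epsilon$) and is complemented by a Lipschitz-type bound $\bigl|\ln|Au|-\ln|Av|\bigr|\leq K\,\mathfrak d(u,v)$ for $A\in\mathcal S$ and $u,v$ in a compact subset of the interior of $\XX$, standard in the theory of products of non-negative matrices (see \cite{H,BPP}). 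The contribution of the initial steps, during which $\bar u_k$ and $\bar v_k$ may still lie near the boundary of $\XX$, is handled using \textbf{Nn}-\textit{Contraction property}: after a bounded number $n_0$ of deterministic steps the chains $(\bar u_k),(\bar v_k)$ enter such a compact subset with positive probability, and on this event the Lipschitz constant $K$ is uniformly controlled by a quantity with finite $\beta$-th moment (thanks to \textbf{Nn}-\textit{Reverse moments} at the comfortable order $6+\varepsilon>2\beta$). Raising to the $\beta$-th power and summing the geometric decay of $\EE[\mathfrak d(\bar u_{k-1},\bar v_{k-1})]$ (after truncation to the bounded subset) completes the argument, yielding $\EE[(\ln^+C_1(\overline V_0))^{3+\varepsilon'}]<+\infty$.
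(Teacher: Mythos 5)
Your treatment of cases {\bf S} and {\bf Rk1} matches the paper's. For {\bf I}, your route (passing from $\ln^+\bigl(1+\sqrt 2\,\mathfrak n\,\delta\bigr)$ to $(\mathfrak n\,\delta)^\eta/\eta$, then Minkowski and H\"older, choosing $\eta$ small) is valid under the exponential moment hypotheses and is essentially the direct shortcut the paper itself remarks on at the end of its case~{\bf I} discussion. It is, however, intrinsically tied to having a small polynomial moment of $\mathfrak n$ itself.

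The genuine gap is case {\bf Nn}, and you correctly diagnose it: {\bf Nn}-\emph{Reverse moments} only gives polynomial moments of $\ln \mathfrak n$, so the $\ln(1+y)\leq y^\eta/\eta$ trick collapses. But the fix you propose does not work as sketched. The Lipschitz-type bound $|\ln|Au|-\ln|Av||\leq K\,\mathfrak d(u,v)$ with $K$ controlled by $\ln(\Vert A\Vert/v(A))$ is a Birkhoff-type estimate that requires $A$ to be strictly positive (finite projective diameter); for a general $A_k\in\mathcal S$ the constant $K$ can be infinite with positive probability, so ``uniformly controlled by a quantity with finite $\beta$-th moment'' fails before you even get started. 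Moreover, the {\bf Nn}-\emph{Contraction property} only gives that $\mu^{\star n_0}(\mathcal S^+)>0$: you get the chains inside a good compact with \emph{positive} probability after $n_0$ steps, not almost surely, and the sketch never explains how the complementary event is controlled. This is not a cosmetic issue --- the whole point of this case is that one cannot rely on a.s.\ containment or on moments of $\mathfrak n$ itself.

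What the paper does instead is a unified truncation inside Lemma \ref{lem-cont-RCN}, applied identically to {\bf I} and {\bf Nn}: starting from the two elementary bounds $\ln^+(|A\bar u|/|A\bar v|)\leq \ln^+\mathfrak n(A,\bar v)$ and $\ln^+(|A\bar u|/|A\bar v|)\leq \sqrt 2\,\mathfrak n(A,\bar v)\,\delta(\bar u,\bar v)$, one splits at the deterministic threshold $a_k=\rho^{-k/2\beta}$. On $\{\mathfrak n>a_k\}$ the first bound is used, amplified by the factor $(\ln^+\mathfrak n/\ln^+ a_k)^{\beta'}\geq 1$ to produce summable $1/k^{\beta'}$ decay; on $\{\mathfrak n\leq a_k\}$ the second bound gives $a_k^\beta\,\delta$, and the $L^1$-contraction $\mathbb E[\delta(\bar u_{k-1},\overline{V}_{k-1})]\leq C\rho^{k-1}$ absorbs the $a_k^\beta=\rho^{-k/2}$ loss. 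This only consumes a polynomial moment of $\ln^+\mathfrak n$ of order $b>2\beta$ (hence $6+\varepsilon$ suffices for $\beta>3$), never any moment of $\mathfrak n$ itself. You would need to supply this truncation idea (or an equivalent one) for your {\bf Nn} case to be complete.
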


\begin{proof} 
	$\bullet$ \underline{Case {\bf S}}.
	The case {\bf S} is straightforward since $C_1(v)=1$ for any $v \in {\bf P}(\RR^d)$  in this case. The other cases rely on the contraction property satisfied by the process $(\overline{V}_k)_{k \geq 0}$. Let us first propose a general criterion then apply it in these 3 remaining situations.

 $\bullet$ \underline{Case {\bf Rk1}}. In this case,  $\bar u_k=\overline{V}_k=\overline{w}_k$ for any $k\geq 1$ so that
	$$ 	\ln^+ C_1(\bar u,  \bar v)\leq \ln^+\left({\vert A_1\bar u\vert \over  \vert A_1 \bar v\vert }\right)\leq \ln^+\left({\vert \langle\tilde w_1,u\rangle\vert \over  \vert \langle\tilde w_1,v\rangle \vert }\right) )\leq \ln^+\left({ 1 \over  \vert \langle\tilde w_1,v\rangle \vert }\right).  $$
	Thus  $\textbf{A}_2(\beta)$ is a direct consequence of  hypothesis \textbf{Rk1}-\textit{Reverse moments} with  $3< \beta <3+\varepsilon$.
	
	To deal with cases \textbf{I} and \textbf{Nn}, we   use the following  lemma.

	\begin{lemma}\label{lem-cont-RCN} We assume that there exists an invariant mesure $\nu$  for the chain $(\overline{V}_k)_{k \geq 0}$ with support included in ${\bf P}(\RR^d)$,   constants $\beta >2, C>0, \rho \in (0, 1)$  
		
	(i) $\displaystyle{\mathbb E\left(\left(\ln^+\frac{\Vert A_1\Vert }{\vert A_1 \overline{V}_0 \vert } \right)^{b}
\right)}$ for some $b>2\beta $ where $\overline{V}_0$ has distribution $\nu$;

(ii) $\displaystyle \mathbb E\left(\delta(A_{k, 1}\cdot \bar u, A_{k, 1}\cdot \overline{V}_0) \right)\leq C \rho ^k$   for some fixed $\bar u\in \Pd$. 

Then $\quad \displaystyle \mathbb E\left((\ln^+ C_1(\bar u, \overline{V}_0))^{\beta}\right)< +\infty$.
	\end{lemma}
 Proof. First,   for any $\beta\geq 1$,   inequality  (\ref{eq-logRNC-dec}) yields 
	\[
	\mathbb E\left((\ln^+ C_1(\bar u, \overline{V}_0))^\beta\right)^{1/\beta}
	\leq  \sum_{k \geq 1} \mathbb E\left(\left(\ln^+{\vert A_k \bar u_{k-1}\vert \over  \vert A_k \overline{V}_{k-1}\vert }\right)^\beta\right)^{1/\beta} 	\] 
	Note that,  for any $A \in M(d, \mathbb R)$ and  any two directions $\bar u, \bar  v \in {\bf P}(\RR^d)$ with representative vectors $u, v\in \Sd$,  
	\begin{equation*}
		\ln^+ 	{\vert A\bar u\vert \over \vert A\bar v\vert}\leq \ln^+  {\Vert A\Vert \over \vert A\bar  v\vert}= \ln^+ {\mathfrak n} (A, \bar v)
	\end{equation*}
	with ${\mathfrak n} (A, \bar v):=  \Vert       A \Vert / \vert A\bar v\vert\in[1,+\infty]$.
Furthermore  
	\begin{equation*}  
	\ln^+ 	{\vert A\bar u\vert \over \vert A\bar v\vert}\leq \ln^+  {\vert Av\vert +\vert A(u-v)\vert \over \vert Av\vert}\leq  \ln^+ (  1+{\mathfrak n} (A, v)\vert u-v\vert) 	
	\end{equation*}
	and
	\begin{equation*}  
	\ln^+ 	{\vert A\bar u\vert \over \vert A\bar v\vert}\leq \ln^+  {\vert A(-v)\vert +\vert A(u+v)\vert \over \vert A\bar v\vert}\leq  \ln^+ (  1+{\mathfrak n} (A, \bar v)\vert u+v\vert) 	
	\end{equation*}
	so that 
	\begin{equation} \label{kazjcehbkc}
	\ln^+ 	{\vert A\bar u\vert \over \vert A\bar v\vert}\leq  \ln^+ (  1+{\mathfrak n} (A, \bar v)\min( \vert u-v\vert, \vert u+v\vert))\leq \sqrt{2} \ {\mathfrak n} (A, \bar v)  \ \delta(\bar u, \bar v). 	
	\end{equation}
In particular, for any $k \geq 1$,  setting $a_k=\rho^{-k/2\beta}$, 
	\begin{align*}
\left(	\ln^+ {\vert A_k\bar u_{k-1}\vert \over  \vert A_k\overline{V}_{k-1}\vert}\right)^\beta
&= 
\left(	\ln^+ {\vert A_k\bar u_{k-1}\vert \over  \vert A_k\overline{V}_{k-1}\vert}\right)^\beta  {\bf 1}_{[\mathfrak n(A_k, \overline{V}_{k-1})> a_k]}+ 
\left(	\ln^+ {\vert A_k\bar u_{k-1}\vert \over  \vert A_k\overline{V}_{k-1}\vert}\right)^\beta {\bf 1}_{[\mathfrak n(A_k, \overline{V}_{k-1})\leq a_k]} 
\\	\leq	& 
\left(	\ln^+ \mathfrak n(A_k, \overline{V}_{k-1}) \right)^\beta  {\bf 1}_{[\mathfrak n(A_k, \overline{V}_{k-1})> a_k]}+ 
\left( \sqrt{2} \ 	\mathfrak n(A_k, \overline{V}_{k-1}) \ \delta(\bar u_{k-1},\overline{V}_{k-1})\right)^\beta {\bf 1}_{[\mathfrak n(A_k, \overline{V}_{k-1})\leq a_k]} 
\\	\leq	& 
\left(	\ln^+ \mathfrak n(A_k, \overline{V}_{k-1}) \right)^\beta {\left(	\ln^+ \mathfrak n(A_k, \overline{V}_{k-1}) \right)^{\beta' }\over (\ln^+ a_k)^{\beta' }}  + 2^{\beta/2} \  a_k^\beta \ \delta(\bar u_{k-1},\overline{V}_{k-1})^\beta
\\	\leq	& \left(\frac{2\beta}{-(\ln\rho)}\right)^{\beta' }
 { \left(	\ln^+ \mathfrak n(A_k, \overline{V}_{k-1}) \right)^{\beta + \beta' }\over k^{\beta' }}  + 2^{\beta/2} \rho^{-k/2} \delta(\bar u_{k-1},\overline{V}_{k-1})
	\end{align*}
 since $\delta(\bar u_{k-1},\overline{V}_{k-1})\leq 1$ and $\beta>1$. 
Consequently, there exist   positive  constants $C(\beta,\beta',\rho)$ and $C'$ such that
\begin{align*}
	\mathbb E\left(\left(\ln^+{\vert A_k\bar u_{k-1}\vert \over  \vert A_k \overline{V}_{k-1} \vert }\right)^\beta\right)^{1/\beta} &\leq C(\beta,\beta',\rho)\ 
	 \left( \frac{\mathbb E\left(\ln ^+\mathfrak n(A_1, \overline{V}_{0})^{\beta + \beta' }\right)}{k^{\beta' }}+  \rho^{-k/2} \EE(\delta(\bar u_{k-1},\overline{V}_{k-1}))\right)^{1/\beta } 
	 \\&\leq  C(\beta,\beta',\rho)\  
	 \left( \frac{\mathbb E\left(\ln^+\mathfrak n(A_1, \overline{V}_{0})^{\beta + \beta' }\right)}{k^{\beta' }}+  C'\  \rho^{k/2}  \right)^{1/\beta }. 
\end{align*}
Since  $\beta'=b-\beta>\beta$,  the   sequence 
$\displaystyle 
\left(\mathbb E\left(\left(\ln^+{\vert A_k\bar u_{k-1}\vert \over  \vert A_k \overline{V}_{k-1} \vert }\right)^\beta\right)^{1/\beta} \right)_{k \geq 1}
$
is summable, hence  $\ln^+ C_1(\overline{V}_0)$ has   a moment of order $\beta$.


\rightline{$\Box$}

Let us now apply this lemma.

$\bullet$ \underline{Case {\bf I}}.
Under hypotheses {\bf I}, the assumptions of Lemma \ref{lem-cont-RCN} are satisfied for any $\bar u\in\Pd$.  Indeed by \cite{BL}, proximality, strong irreductibility  and  exponential moments  hypotheses imply condition \textit{(ii)}  of Lemma \ref{lem-cont-RCN}  (i.e uniform exponential contraction in mean) for every $\bar u$ and $\bar v\in \Pd$.  

As a matter of  fact, the existence of exponential moments  allows a more direct proof, since it  yields to the following inequality: there exist constants $C>0$ and $ \rho \in [0, 1)$ s.t.
$$\mathbb E\left(\left(\ln^+ C_1(\bar u, \overline{V}_0)\right)^p\right) \leq C  \ 
	\mathbb E(\mathfrak n(A_1, \overline{V}_0)^{\epsilon })
	\ \sum_{n=1}^{+\infty}\rho^{n} < +\infty.
	$$
 As  mentioned in section \ref{sect-I},  some work  in progress \cite{Pen23} seems able  to relax this strong moment assumption, ensuring that condition \textit{(ii)}  of Lemma \ref{lem-cont-RCN} holds without moment condition. If this is confirmed Lemma \ref{lem-cont-RCN} can be applied only under polynomial moments.   

$\bullet$ \underline{Case {\bf Nn}}
In this case, under hypotheses {\bf Nn}, the support of $\nu$ is included in $\mathbb X$.  By (\ref{meanN}) we have that for all $u, v\in \XX,$  
$$\mathbb E\left(\delta(A_{k, 1}\cdot \bar u, A_{k, 1}\cdot \bar v) \right)\leq 2\  \mathbb E\left(\mathfrak d(A_{k, 1}\cdot u, A_{k, 1}\cdot  v) \right)\leq 2 \ c\ \rho ^k\  \mathfrak{d} (u, v)\leq 2\ c\  \rho ^k$$
Hence, the assumptions of Lemma \ref{lem-cont-RCN} are satisfied for any $u\in \XX$,  taking $b>6$ in order to get $\beta >3$.	
 	
 \end{proof}

 \noindent {\bf Proof of Theorem \ref{thm-main}} 
Theorem \ref{thm-main} is a direct consequence of criterion \ref{thm-from-RCN-to-cons}. Indeed, in the four cases explored here:

$\bullet$ Assumption  ${\bf A}_1(\alpha)$ is satisfied for $\alpha \in (0, 1/2)$, by Proposition \ref{zekbghjkd};  

$\bullet$ In the case  {\bf S}, assumption {\bf B}$(\beta)$ is satisfied for any $\beta>0$ since the  RNC random variable   vanishes in this case. 

In the  case {\bf Rk1},  this assumption holds for some $\beta >0$  by hypothesis {\bf Rk1}-{\it Reverse moments}. 

In the two remaining cases, this is a consequence of Proposition \ref{prop-RCN}.

$\bullet$ The last assumption {\bf B}($\gamma)$ does not depend on the nature of the matrices $A_k$.

We end this section by discussing the  irreducibility assumptions of the linear part.  
\begin{remark} : {\bf optimality of the irreducibility assumptions}

Here we give two examples of transient affine recursions on $\RR^2$ in the critical case where  the irreducibility assumptions on the linear part $\overline{\mu}$ fail. 

\underline{Example 1}(reducible linear part) 

It is clear that Theorem 1.1 fails if we only assume that $\supp(\overline{\mu})$ is reducible as one can see by taking $A_1=I_2$ almost surely. Here is another less trivial example. 
 Let     $\mu$ be a probability measure on $\textrm{Aff}(\RR^2)$ such that $\overline{\mu}$ is supported on the set of  $2\times 2$-diagonal matrices $A=\begin{bmatrix}\alpha(A)& 0 \\
 0 & \alpha^{-1}(A)\end{bmatrix}$ of determinant one. This is a reducible model as each of the coordinate axis is invariant under the support of $\overline{\mu}$. 
 Assume that  $\int{\log \alpha(A) d\overline{\mu}(A)}=0$,  $\overline{\mu}(\{A : \alpha(A)=1\})<1$ and  $\mu(\{g : g\mathcal A= \mathcal A\})<1$  for every 
 proper non-empty affine subspace $\mathcal A$ of $\RR^2$. Assume also that $\mu$ has a moment of order $2+\delta$ for some $\delta>0$.   Then the process $(X_n)_{n\in \NN}$ is transient. 
 Indeed, setting 
$X_{n}^0:=(x_n,y_n)$, we notice that the diagonal structure of $A_n\cdots A_1$ implies that  
 both $x_{n}$ and $y_{n}$ are critical one-dimensional affine recursions  with respective linear part $\alpha(A_n\cdots A_1)$ and $\alpha^{-1}(A_n\cdots A_1)$. Each of these affine recursions satisfies  assumptions (H) of Babillot--Bougerol--Elie's local contraction's theorem \cite[Theorem 3.1]{BBE}. The aforementioned theorem yields    that for any $K>0$, almost surely, $\alpha(A_n\cdots A_1){\bf 1}_{|x_n|\leq K}\underset{n\to +\infty}{\to} 0$ and $\alpha^{-1}(A_n\cdots A_1){\bf 1}_{|y_n|\leq K}\underset{n\to +\infty}{\to} 0$. 
It follows that,  almost surely, $|X_n^0|\underset{n\to +\infty}{\to} +\infty$, hence  the process $(X_n)_{n\in \NN}$ is   topologically transient.

\underline{Example 2} (irreducible but not strongly irreducible linear part) 

Consider a probability measure $\mu$  on $\textrm{Aff}(\RR^2)$  whose  projection $\overline{\mu}$ on  $\textrm{GL}_2(\RR)$ satisfies  $\textrm{Supp}(\overline{\mu})=\left\{\begin{bmatrix}0 & \lambda\\
    \lambda^{-1}& 0 \end{bmatrix}, \begin{bmatrix}0& 1\\
    1 & 0 \end{bmatrix}\right\}$ with $\lambda>1$. It is immediate that  the support of $\overline{\mu}$ is irreducible  but not strongly (as it permutes the two coordinate axis). Assume also that there is no proper affine subspace  of $\RR^2$ fixed by all elements of the support of $\mu$. 
   The process $(X_{2n})_{n\in \NN}$ is the affine recursion of $\RR^2$ induced by the probability measure $\mu^{\star 2}$ on $\textrm{Aff}(\RR^2)$.
Observe that $\overline{\mu}^{\ast 2}$ is supported on $\{I_2, \textrm{diag}(\lambda, \lambda^{-1}), \textrm{diag}(\lambda^{-1}, \lambda)\}$ and gives equal mass to the two diagonal non identity matrices.  It follows that  for every $n\in \NN$, $A_{2n}\cdots A_1=\begin{bmatrix}\lambda^{S_n}& 0 \\
0 & \lambda^{-S_n}\end{bmatrix}$ where $(S_n)_n$ is a centred random walk on $\ZZ$. Hence, from the strong law of large numbers,   $\gamma_{\bar \mu} =0$. It is easy to check that  there is no proper affine subspace of $\RR^2$ fixed by all elements of $\mu^{\ast 2}$. 

\noindent By  Example 1.~above, we deduce that 
the process $(X_{2n})_{n\in \NN}$ is  transient. Since $X_{2n+1}=A_{2n+1} X_{2n}+B_{2n+1}$ for every $n\in \NN$, and since   the support of $\mu$ is compact, we deduce that $|X_n|\underset{n\to +\infty}{\to} +\infty \  \PP$-almost surely. Hence,  the process $(X_n)_{n\in \NN}$ is  topologically transient.

\end{remark}

 \section{On  the fluctuations of $(\ln |A_{n, 1}v|)_{n \geq 1 } $ in the rank one case}\label{sectionfluctuationsRk1}

The process   $(\ln |A_{n, 1}v|)_{n \geq 1 } $ fits into the general context of Markov walks, that is random walks whose increments are governed by   an underlying Markov chain $(Z_n)_{n \geq 0}$. The fluctuations of Markov walks have been studied in  \cite{GLL}, Theorem 2.2, under some general hypotheses on the  chain $(Z_n)_{n \geq 0}$. There are several ways to apply  this result, we present here one suitably adapted to the situation.

 We denote here  by $ \Rpm$ the quotient space $\RR^d/\sim$ where $z \sim z'$ means $z =  z'$ or $z = - z'$, for any $z, z'$ in $\mathbb R^d$;   the class of $z$  equals $\pm z:=\{z, -z\}$. 
 The one-to-one correspondence 
\begin{align*}   \mathbb R\times \Pd  \quad &\to \quad  \Rpm\setminus \{0\}   
\\
 (\lambda,\bar v) \quad &\mapsto \quad \pm e^\lambda v  
\end{align*}
 (with $v \in \Sd$ a representative of $\bar v$) allows to identify $\RR\times \Pd$ with $ \Rpm\setminus \{0\}$ . In order to simplify the notation, the element $\pm z\in\RR^d_\pm$ is also denoted $z$ and is identified with  $(\ln\vert z\vert , \bar z)$ when $z\neq0$.

 Let  $(Z_n)_{n \geq 0}$ be the Markov chain on  $ \Rpm$ defined recursively by
$$ Z_n:=\pm A_nZ_{n-1}/\vert Z_{n-1}\vert  \mbox{ if }Z_{n-1}\neq 0\mbox{ and } Z_{n}=0  \mbox{ if }Z_{n-1}= 0$$
and whose transition kernel $P$ is  defined by:  for any Borel function $\varphi:  \Rpm  \to \mathbb R^+$ and any $z\in  \Rpm$,  
$$
  P \varphi(z):=\left\{
  \begin{array}{cll}
 \displaystyle  \int_{\mathcal M(d, \mathbb R)}\varphi\left(\pm A\frac{z}{\vert z\vert }\right)\  \bar \mu  ({\rm d} A)  &{\rm if}    &z\neq 0,
  \\
 \varphi(0)& {\rm if} & z=0.
\end{array}
\right.
$$
Lets us emphasize that $\overline{Z_n}=\overline{V_n}$ for any $n\geq 0$, where $(\overline{V}_n)_{n\geq 0}$ is the Markov chain  on the projective space defined in the previous sections. When $Z_n \neq 0$,     
$$Z_n\cong (\ln\vert Z_n\vert , \overline{Z_n} )=(\ln\vert A_n \overline{Z}_{n-1}\vert , \overline{Z_n}).$$ 
For any probability measure $\nu$ on  $\Pd$, let us denote  $\bar\mu *\nu $ the probability measure    on $ \Rpm$ defined by : for any Borel function $\varphi:  \Rpm  \to \mathbb R^+$,   
 $$
\bar\mu *\nu (\varphi):=\int \varphi(\pm Av) \bar{\mu}(dA)\nu(d\bar v),
.$$  When $\nu$ is $\bar \mu$-invariant, the measure $\bar \mu * \nu$ is a stationary probability measure for $(Z_n)_{n \geq 0}$  with support in  $\RR^d_\pm\setminus{0}$.

For $z \in \Rpm\setminus\{0\} $, set $f(z):=\ln \vert z\vert $. When $Z_0\neq 0$, the quantity $\ln  \vert A_{n, 1} Z_0\vert, n \geq 0,$ may be decomposed as 
$$
S_n(\overline{Z}_0)=\ln  \vert A_{n, 1} \overline{Z}_0\vert= \sum_{k=1}^{n} \ln \left\vert A_k \overline{Z}_{k-1}  \right\vert=\sum _{k=1}^{n} f({ Z}_k).
$$
The process $S_n(\overline{Z}_0)$ is a Markov walk on $\RR$ driven by the Markov chain $(Z_n)_n$ on $\RR^d_\pm$.

Under hypotheses {\bf Rk1},  the distribution of $\overline w_0$ is the unique  invariant measure $\nu$  for the process $(\overline Z_n)_{n \geq 0}$ on $\Pd$.
We assume that $  Z_0$ has distribution $\bar \mu *\nu$.

\begin{prop}\label{prop-fluct-Rk1}
	 Under hypotheses {\bf Rk1},  we have  $\EE(\ell_\rho(\overline{Z}_0)^\alpha)< +\infty$ for $\alpha<1/2$ . \end{prop}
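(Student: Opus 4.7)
The plan is to recognize $\ell_\rho(\overline{Z}_0) = \inf\{n\geq 1 : S_n(\overline{Z}_0)\leq \ln\rho\}$ as the first passage below the (negative) level $\ln\rho$ for the Markov walk $(S_n(\overline{Z}_0))_{n\geq 0}$ driven by the chain $(Z_n)_{n\geq 0}$, and then to invoke the general fluctuation result \cite[Theorem 2.2]{GLL}. The target is a tail bound of the form $\PP(\ell_\rho(\overline{Z}_0)>n)\leq C/\sqrt n$; once this is in hand, for any $\alpha<1/2$,
\[
\EE\bigl(\ell_\rho(\overline{Z}_0)^\alpha\bigr)
\;\leq\; C'\sum_{n\geq 1} n^{\alpha-1}\,\PP(\ell_\rho(\overline{Z}_0)>n)
\;\leq\; C''\sum_{n\geq 1}n^{\alpha-3/2}<+\infty,
\]
since $\alpha-3/2<-1$.

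To apply the Markov walk fluctuation theorem, I will check its hypotheses in the \textbf{Rk1} setting. The driving chain enjoys a very strong mixing property here: for $k\geq 1$, $\overline{V}_k=\overline{w}_k$ is distributed as $\nu$ independently of $\mathcal F_{k-1}$, so the transfer kernel of $(\overline{V}_n)_{n\geq 0}$ is rank one after the initial step, and the required spectral-gap-type condition on $(Z_n)_{n\geq 0}$ on $\RR^d_\pm$ is essentially trivial once one restricts to trajectories starting at $Z_0\sim \bar\mu*\nu$. The centering of the walk follows from \textbf{Rk1}-\textit{Centring}, and the non-degeneracy of the asymptotic variance $\sigma^2=\mathbb V(Y_2)+2\mathrm{cov}(Y_2,Y_1)>0$ is exactly \textbf{Rk1}-\textit{Non degeneracy}. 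The moment conditions \textbf{Rk1}-\textit{Moments} and \textbf{Rk1}-\textit{Reverse moments} provide $(2+\varepsilon)$-moments of the increment $Y_k=\ln a_k+\ln|\langle\tilde w_k,w_{k-1}\rangle|$; the stronger $(3+\varepsilon)$-moment on $\ln|\langle\tilde w_1,w_0\rangle|$ is there precisely to control the integrability of the very first step, where the factor $|\langle \tilde w_1,\overline{V}_0\rangle|$ may a priori be much smaller than in the stationary regime.

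The main obstacle is the translation work: fitting the simple rank-one structure into the rather abstract framework of \cite{GLL} requires identifying an appropriate state space (here $\RR^d_\pm\cong \RR\times \Pd$), verifying that the cocycle $(z,A)\mapsto \ln|A\overline{z}|$ satisfies the integrability conditions with the required margin, and taking care of the non-stationarity of the first increment (since the initial law $\nu$ for $\overline{V}_0$ is invariant for $(\overline{V}_n)$ but the starting point $Z_0\sim \bar\mu*\nu$ is used to read off the absolute norm). Once these points are sorted out, $(Y_k)_{k\geq 1}$ is seen to be a $1$-dependent stationary sequence with the needed moments, centering and non-degenerate variance, \cite[Theorem 2.2]{GLL} yields $\PP(\ell_\rho(\overline{Z}_0)>n)=O(n^{-1/2})$, and the claim follows by the elementary integration above.
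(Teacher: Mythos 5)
Your plan is the same one the paper follows: treat $S_n(\overline{Z}_0)=\sum_{k=1}^n f(Z_k)$ as a Markov walk driven by the chain $(Z_n)_n$ on $\RR^d_\pm$, invoke the Grama--Lauvergnat--Le Page fluctuation machinery to obtain $\PP(\ell_1^{(\rho)}(\overline{Z}_0)>n)=O(n^{-1/2})$, and integrate the tail to get $\alpha$-moments for every $\alpha<1/2$. Your observation that the projective chain is rank one after a single step, so that its transfer operator essentially has a one-dimensional range, is precisely the structural fact the paper exploits. However, the proposal stops at the point where the real work begins: you write that the main obstacle is ``fitting the simple rank-one structure into the rather abstract framework of \cite{GLL}'' and then assert the conclusion ``once these points are sorted out.'' That is exactly the portion the paper's proof consists of. Concretely, \cite[Theorem 2.2]{GLL} is not an off-the-shelf black box; it requires a weighted Banach space $\mathcal B$ and verification of hypotheses M1--M5. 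The paper constructs the weight $N(z)=\sup_{n\geq 0}\bigl(P^n|f|^{\delta p}(z)\bigr)^{1/p}$, proves the key a~priori bounds $P^nN\leq 3N$ and $(P^nN^p)^{1/p}\leq 3N$ from the identity $P^n\varphi=\bar\mu*\nu(\varphi)$ for $n\geq 2$, sets up $\mathcal B=\{\varphi:\sup_z|\varphi(z)|/(1+N(z))<+\infty\}$, and then checks each of M1, M2 (including $P=\Pi+Q$ with $Q^n=0$ for $n\geq 2$), M3, M4 (with $N_0=N$, $N_k=N\mathbf 1_{\{N\geq k\}}$, $p_0=p$, $\beta=p-2$, $\gamma=\delta-1$) and M5. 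Without this, the claimed tail estimate is not established. Your ``1-dependent stationary sequence'' remark, while correct and suggestive, is not by itself a verification of the hypotheses of \cite{GLL}, which are phrased in terms of operators on a Banach space, not in terms of $m$-dependence.

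A smaller inaccuracy: you explain the $(3+\varepsilon)$-moment in \textbf{Rk1}--\textit{Reverse moments} as being ``precisely'' for the integrability of the first increment of the walk. In fact, for Proposition \ref{prop-fluct-Rk1} alone a $(2+\varepsilon)$-moment on $\ln|\langle\tilde w_1,w_0\rangle|$ would suffice (the proof fixes $\delta p<2+\varepsilon$ and only needs $\EE(|\ln|A_1\overline{V}_0||^{\delta p})<\infty$). The strength $3+\varepsilon$ is instead what gives the reverse norm control property $\mathbf A_2(\beta)$ with $\beta>3$ in Proposition \ref{prop-RCN}, where the bound $\ln^+ C_1(\bar u,\bar v)\leq \ln^+(1/|\langle\tilde w_1,v\rangle|)$ is used directly.
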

	 
 \begin{proof}
Let $\varepsilon >0$ given by hypothesis {\bf Rk1}-{\it Moments} and 
	fix $\delta>1$ and $ p >2$  such that $\delta p <2+\varepsilon$. For all 
	$z\in \Rpm\setminus\{0\} $,  set 
	 
	$$
	N(z):=\sup_{n\geq 0}\left(P^n\vert f\vert ^{\delta p }(z)\right)^{1/ p }=\sup_{n\geq 0}\EE_z\left(\vert \ln\vert Z_n\vert \vert ^{\delta p }\right)^{1/ p }\in [0,\infty].
	$$
	Observe that,   under hypothesis {\bf Rk1}, for any  non negative  Borel   function $\varphi: \Rpm\to \mathbb R_+ $, the sequence  $(P^n\varphi(z))_{n \geq 0}$  converges ``very quickly'' towards  $\mub*\nu(\varphi).
	$
	Indeed,  $\overline{Z}_{n}=\overline  w_{n}$ for any $n \geq 0$ ; thus $\overline{Z}_n$ is independent from the starting point  in $\mathcal{X}_0:=\left\{ z\in \Rpm|\  \PP(|A_1z|=0)=0 \right\}$. This yields, for any  $z\in\mathcal{X}_0$ and $n\geq 2$,
	\begin{equation}\label{eq-sg-RK1}
		P^n\varphi(z)=\EE_z(\varphi(\pm A_n Z_{n-1}/\vert Z_{n-1}\vert))=\EE(\varphi(\pm A_1 Z_0/\vert Z_0\vert))=\mub*\nu(\varphi).
	\end{equation}
 	We claim that this ensures that for all $n\in \NN$ and $z\in\Rpm$, 
	\begin{eqnarray}\label{eq-PN}
		P^nN(z)\leq 3 N(z)\mbox{ and } (P^nN^ p (z))^{1/ p }\leq 3 N(z).
	\end{eqnarray}
	Indeed, $N(z)=+\infty$ when $z\not\in\mathcal{X}_0$ and the inequalities are trivially satisfied; when   $z\in\mathcal{X}_0$, the identity    (\ref{eq-sg-RK1}) implies  that 
	$N(z)\leq \vert f\vert ^\delta(z) + \left(P\vert f\vert ^{\delta p }(z)\right)^{1/ p }+(\mub*\nu(\vert f\vert ^{\delta p }))^{1/ p }$.
	Then, by Jensen inequality, since $1/ p <1$,
	\begin{eqnarray*}
		P^nN(z)&\leq& P^n\vert f\vert ^\delta(z)+P^n\left(P\vert f\vert ^{\delta p }\right)^{1/ p }(z)+ (\mub*\nu(\vert f\vert ^{\delta p }))^{1/ p }\\
		&\leq& \left(P^n\vert f\vert ^{\delta p }(z)\right)^{1/ p }+\left(P^{n+1}\vert f\vert ^{\delta p }(z)\right)^{1/ p } +(\mub*\nu((\vert f\vert ^{\delta p }))^{1/ p }\leq 3N(z).
	\end{eqnarray*}
	Similarly, by the $L_p$-triangle  inequality,
	\begin{eqnarray*}
		\left(P^nN^ p (z)\right)^{1/ p  }&\leq& \left(P^n\vert f\vert ^{\delta p }\right)^{1/ p }(z)+
		\left(P^n\left(P\vert f\vert ^{\delta p }\right)^{\frac{1}{ p } p }(z)\right)^{1/ p  }+(\mub*\nu(\vert f\vert ^{\delta p }))^{1/ p }\\
		&\leq& \left(P^n\vert f\vert ^{\delta p }(z)\right)^{1/ p }+\left(P^{n+1}\vert f\vert ^{\delta p }(z)\right)^{1/ p } +(\mub*\nu(\vert f\vert ^{\delta p }))^{1/ p }\\
		&\leq &3N(z).
	\end{eqnarray*}
	Consequently, it is of interest to consider the subset $\XN$ of $\Rpm$ defined by
$$\XN:=\left\{z\in\Rpm \mid   N(z)< +\infty \right\}.$$
 Under  hypothesis {\bf Rk1}, the set  $\XN$ has full $\bar\mu *\nu$-measure  since 
 $$\EE_{\bar\mu *\nu}(N(Z_0))\leq 3 \EE(\vert \log\vert A_1\overline{V}_0\vert \vert ^{\delta p })^{1/ p }< +\infty$$
  in this case. Thus we can restrict the Markov operator $P$ to spaces of functions restricted to   $\XN$. Let us consider the  space $\mathcal B$ of Borel  functions $\mathbb C$ defined by
	\begin{equation} \label{zkeg fbej}  
		\mathcal B= \left\{\varphi: \XN \to \mathbb C\mid \underbrace{\sup_{z\in \XN } \frac{ \vert \varphi(z)\vert}{1+N(z)}}_{=:\vert \varphi \vert _{\mathcal B} } <+\infty \right\}
	\end{equation} 
	The space $(\mathcal B, \vert \cdot \vert_{\mathcal B})$ is a Banach space over $\mathbb C$.

	We now restate   hypotheses M1-M5 introduced in \cite{GLL} and check that  under hypotheses {\bf Rk1} they  are  satisfied  on  the   Banach space $\mathcal B $ by the  kernel $P$ and the function  $f:  \Rpm\setminus\{0\} \to \mathbb R, z\mapsto f(z):=\ln \vert z\vert $.   This allows us to  apply Theorem 2.2 in \cite{GLL} and  conclude  the proof of   Proposition  \ref{zekbghjkd}.

	\underline {Hypothesis M1}
	
	\textit{M1.1. The constant function $1$ belongs to $\mathcal B$.}
	
	This is a trivial consequence of the fact that $1\leq 1+N$.
	
	\textit{M1.2. For any $z \in \XN $,  the Dirac mass $\delta_z$ at $z$ belongs to   
	  the topological dual space $\mathcal \cB' $ of $\cB$,  endowed with the norm $\displaystyle  \vert  \Phi\vert _{\mathcal B'}=\sup_{\stackrel{\varphi \in \mathcal B}{\varphi \neq 0}}{\vert \Phi(\varphi)\vert \over \vert \varphi\vert_{\mathcal B} }.$}
	
	Indeed, by definition of the norm on $\cB$,  for any $z\in \XN $,
	\begin{align*}
	\vert  \delta_{z}\vert _{\mathcal B'}&=\sup_{\stackrel{\varphi \in \mathcal B}{\varphi \neq 0}}{\vert \varphi(z)\vert \over \vert \varphi\vert_{\mathcal B}} \leq   1+N(z)<+\infty.
	\end{align*}

	\textit{M1.3. The Banach space $\mathcal B$ is included in $\mathbb L^1 (P(z, \cdot ))$, for any $z \in \XN $.}
	
	For any  $\varphi\in\cB$ and  $z\in \XN $,
	\begin{align*}
	\int \vert \varphi(z_1)\vert  P((z,dz_1)& 
	\leq\vert \varphi\vert _\cB \int (1+N(A_1z/\vert z\vert )) \bar \mu  (dA_1)
	\\
	&=\vert \varphi\vert _\cB(1+PN(z))
	 \leq \vert \varphi\vert _\cB(1+3N(z))<+\infty.
	\end{align*}

	\textit{M1.4. There exists $\epsilon_0>0$  such that,  for any $\varphi \in \mathcal B$,   the function $e^{it f}\varphi$ belongs to $\mathcal B$ for every $t \in ]-\epsilon_0, \epsilon_0[$.}
	
	This is a direct consequence of the equality $\vert e^{it f}\varphi\vert =\vert \varphi\vert $ valid for any $t \in \mathbb R$. Property \textit{M1.4. } thus holds for any $\epsilon_0>0$.

	\underline {Hypothesis M2}
	
	\textit{{M2.1. The map $\varphi \mapsto P\varphi$ is a bounded operator on $\mathcal B$.}}\\	
	Indeed,  for any $\varphi \in \cB$ and $z \in \XN$,
	$$\vert P\varphi(z)\vert \leq \EE_z(\vert \varphi(Z_1)\vert )\leq \vert \phi\vert _\cB\EE_z(1+N(Z_1))\leq 3 \vert \varphi\vert _\cB  (1+N (z)) $$
	which implies $P\varphi \in \cB$ and  $\vert P\vert _\cB\leq 3 $.  
	
	\textit{M2.2. There exist constants $C_P>0$  and $\kappa\in [0, 1)$ such that $P = \Pi + Q$,
		where $\Pi$ is a one-dimensional projector and $Q$ a bounded  operator on $\mathcal B$ with spectral radius $<1$ and  satisfying $\Pi Q = Q\Pi = 0$.		
	}\\
	For any $\varphi \in \cB$, set $\Pi(\varphi):=\mub*\nu(\varphi)$.  The equality $ \Pi P=P \Pi =\Pi$ holds since
  $\mub*\nu$ is $P$-invariant.
	
	Thus, with    $Q:=P -\Pi$, it holds 
	$\Pi Q = Q\Pi = 0$ and $P^n=\Pi +Q^n$.
	Furthermore,  equality (\ref{eq-sg-RK1}) may be rewritten as  $P^n=\Pi$,  
	thus $Q^n=0$,  for $n\geq 2$.

	\underline {Hypothesis M3}

	{\it  
		For any $t  \in ]-\epsilon_0, \epsilon_0[$, any $\varphi \in \cB$ and $z \in \XN$,   let $P_t \varphi(z):=\EE_z(e^{itf(Z_1)}\varphi(Z_1))$. Then,  the map $\varphi \mapsto P_t \varphi$ is a bounded operator on $\mathcal B$  such that 
		\[
		\sup_{\stackrel{n \geq 1}{\vert t\vert \leq \epsilon_0}}\Vert P^n_t\Vert_{\mathcal B\to \mathcal B}<+\infty.
		\]
	}
	
	This hypothesis is satisfied in a obvious way here   since $\vert e^{itf} \varphi\vert_\mathcal B=\vert   \varphi\vert_\mathcal B$. Hence $\Vert P^n_t\Vert_{\mathcal B\to \mathcal B}= \Vert P^n\Vert_{\mathcal B\to \mathcal B}\leq 3 $ for any $n \geq 0$ and $t \in \mathbb R$.
	
	\underline {Hypothesis M4} (Local integrability). 
	{\it There exist   bounded  non negative functions $N_0, N_1, N_2, \ldots$  in $\mathcal B$  and  constants $c,\beta,  \gamma>0$  and $ p_0 >2$ such that,  for any 
		$z\in \XN $ and $k \geq 1$, 
		
		1. 
		$\displaystyle 
		\max \left\{
		\vert f( z )\vert ^{1+\gamma}, \Vert \delta_z\Vert _{\mathcal B' },
		\sup_{n \geq 1} \left(\mathbb E_ z ^{1/ p_0 }(N_0(Z_n)^{ p_0 })\right)\right\}
		\leq c (1+N_0( z )); 
		$

		2. $N_0( z ) {\bf 1}_{\{N_0( z )\geq k\}}\leq N_{k}( z )$;
		
		3.  $\displaystyle \vert\Pi (N_k)\vert\leq {c\over k^{1+\beta}}$.
	}
	
	Recall that $\delta >1$ and $p>2$ are fixed and satisfy  $\delta p < 2+\varepsilon$ where $\varepsilon$ is given by hypothesis {\bf Rk1}-{\it Moments}. We set  $N_0:=N$, $N_k:=N( z ) {\bf 1}_{\{N( z )\geq k\}}, k \geq 1,$  and $p_0=p$, $c=3, \beta = p-2, \gamma=\delta-1.$ With these functions and constants, 	hypothesis   { \it M4} is satisfied since : 
	
	{\it M4.1})
$\vert f(z)\vert ^{1+\gamma}=(P^0\vert f\vert ^{\delta p })^{1/ p }\leq N(z)\leq c (1+N(z)), $
	$\displaystyle  \vert  \delta_{z}\vert _{\mathcal B'} \leq 1+N(z)$ by {\it M1.2} and 
	$\mathbb E_ z ^{1/ p }(N(Z_n)^{ p })=\left(P^nN^ p (z)\right)^{1/ p  }\leq 3N(z)$ for any $n \geq 1$, by (\ref{eq-PN});

	{\it M4.2}) This condition is satisfied since $N_k:=N( z ) {\bf 1}_{\{N( z )\geq k\}}, k \geq 1$;

 {\it M4.3}) For any $k \geq 1$, 
	\begin{eqnarray*}
		\Pi(N_k)& = & \EE_{\mu *\nu}(N(Z_0) {\bf 1}_{\{N(Z_0)\geq k\}})
		\\
		&\leq& \EE_{\mu *\nu}\left(N(Z_0) \frac{N( Z_0)^{ p -1}}{k^{ p -1}}\right)\\
		&\leq& {\EE_{\mu *\nu}(N(Z_0)^{ p })\over k^{ p -1}} \leq 3{ \EE_{\mu *\nu}(\vert \ln\vert Z_0\vert ^{  \delta p}) \over k^{1+( p -2)}} \quad {\rm with}  \quad \EE_{\mu *\nu}(\vert \ln\vert Z_0\vert ^{  \delta p}) <+\infty. 
	\end{eqnarray*}
	 
	\underline {Hypothesis M5} . {\it We suppose that 
		$\displaystyle \gamma_{ \bar \mu  }= \lim_{n \to +\infty}{1\over n} \mathbb E_{z}(f(Z_n))=0$ 
		and  }
	$$\sigma^2:=  \lim_{n \to +\infty}{1\over n} \mathbb \overline{V}_{z}\left(\sum_{k=1}^nf(Z_k)\right)>0.$$
	Equality  (\ref{eq-Sn_Rk1}) and hypothesis {\bf Rk1}-{\it Centring}  yield  $\gamma_{\bar \mu}= \mathbb E(\ln a_1)+\mathbb E(\ln \vert \langle \tilde w_2, w_1 \rangle\vert)=0$. 
	
	\noindent Let us now compute the asymptotic variance of $S_n$. Since  $f(Z_k)$ and $f(Z_\ell)$  are independent when $\ell \geq k+2$, it holds
	 \begin{align*}
		{1\over n}\mathbb \overline{V}_{z}\left(\sum_{k=1}^nf(Z_k)\right)&= {1\over n}\sum_{k=1}^n
		 V_z(f(Z_k)) +{2\over n}  \sum_{k=1}^{n-1} {\rm cov}_z(f(Z_k), f(Z_{k+1}))\\
		&{\stackrel{\tiny n \to +\infty}{\longrightarrow}} \quad V (f(Z_2))+2\  {\rm cov}(f(Z_2), f(Z_{3})).
	\end{align*}
    Notice that the asymptotic variance does not depend on $z$ since  the random variables $(f(Z_k), f(Z_{k+1})$  are independent of $Z_0=z$  and have the same distribution as $(f(Z_2), f(Z_{3})$ when $k \geq 2$; furthermore, it is non zero by {\bf Rk1}-{\it Non degeneracy} assumption.

	Finally, under hypotheses {\bf Rk1}, the chain $(Z_n)_{n \geq 0 }$ and the function $f$ satisfy conditions M1-M5 of   \cite{GLL} ; hence,  Theorem 2.4.2 in  \cite{GLL} is valid in our context, which readily implies that there exists a constant $c_\rho >0$ such that, for any $z \in \XN$ with modulus 1,   
	$$
	\PP(\ell_1^{(\rho)}( \bar z)>n)\leq  { c_\rho\over \sqrt{n}}(1+N(z)). 
	$$
In particular, if $Z_0$  has distribution $\bar \mu * \nu$, modulus 1 and $\overline{Z}_0=\overline{V}_0$ has distribution $\nu$, we obtain,   
$$\mathbb P\left( \ell_1^{(\rho)}(\overline{V}_0) >n\right)\leq  {c_\rho\over \sqrt{n}} \overline{\mu}*\nu(1+N).
$$   This imply that condition ${\bf A}_1( \alpha )$ holds for any $ \alpha  <{1\over 2}$.

\end{proof}

  \section{Appendix: generalization to non-proximal invertible matrices}\label{appendix}
When the matrices $A_n$ are invertible, it is possible to generalize the previous results in  the case where  hypothesis {\bf I}-{\it Proximality} fails.
We   only exclude the degenerate case where $T_{\bar\mu}$ is    conjugate to a subsemigroup of the group $\mathbb R^* O_d(\mathbb R)$ of Euclidean similarities, corresponding, after a change of 
Euclidean structure,  to a scalar cocycle already covered by case {\bf S}.  By \cite[Theorem 4.11]{BQ}, the remaining cases satisfy a non-degenerate central limit theorem.
\begin{theo} \label{exterior} Assume that  the matrices $A_n, n \geq 1$, are invertible and satisfy hypotheses  

\centerline{{\bf I}-{\it (Exponential) Moments and Reverse moments},\quad {\bf I}-{\it Strong irreducibility}\quad  and \quad {\bf I}-{\it Centring}.}
Assume moreover that $T_{\overline{\mu}}$ is not conjugate to a subsemigroup of 
$\mathbb{R}^* O_d(\mathbb{R})$. 
Then,  if  there exists $\gamma >2$ such that  $\mathbb E\left((\ln^+\vert B_1\vert)^{\gamma}\right)<+\infty$, the Markov chain $(X_n^x)_{n \geq 0}$ is  recurrent.
\end{theo}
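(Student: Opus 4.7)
The strategy is to reduce the non-proximal situation to the proximal one of case~\textbf{I} via a suitable exterior power representation. Under~\textbf{I}-\textit{Strong irreducibility}, by \cite[Lemma~4.13]{BQ} there exists an integer $r\in\{1,\ldots,d\}$, equal to the multiplicity of the top Lyapunov exponent, such that the push-forward $\bar\mu_r:=(\wedge^r)_*\bar\mu$ is both strongly irreducible and proximal on $\wedge^r\RR^d$. Since $\gamma_{\bar\mu}=0$, the top Lyapunov exponent of $\bar\mu_r$ equals $r\gamma_{\bar\mu}=0$, so the critical regime is preserved; the inequalities $\|\wedge^r A\|\le\|A\|^r$ and $\|(\wedge^r A)^{-1}\|\le\|A^{-1}\|^r$ transfer the exponential moment hypotheses to $\bar\mu_r$. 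All hypotheses of case~\textbf{I} of Theorem~\ref{thm-main} are therefore satisfied by the sequence $(\wedge^r A_n)_{n\ge 1}$ on $\wedge^r\RR^d$.

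The announced extension of Criterion~\ref{thm-from-RCN-to-cons} I would formulate works with the $\wedge^r$-analogues of the ladder time and reverse norm control. Let $\nu_r$ be the unique $\bar\mu_r$-invariant probability on $\mathbf{P}(\wedge^r\RR^d)$, and let $\overline{W}_0\sim\nu_r$ be independent of $(A_n,B_n)_{n\ge 1}$. Define
\begin{equation*}
\ell_1^{(\rho)}:=\inf\{n\ge 1:|\wedge^r A_{n,1}\overline{W}_0|\le\rho\},\qquad C_1^{(r)}:=\sup_{n\ge 1}\frac{\|\wedge^r A_{n,1}\|}{|\wedge^r A_{n,1}\overline{W}_0|}.
\end{equation*}
Applying Propositions~\ref{zekbghjkd} and~\ref{prop-RCN} of case~\textbf{I} to $\bar\mu_r$ yields that $\ell_1^{(\rho)}$ has moments of every order $\alpha<1/2$ and that $\ln^+ C_1^{(r)}$ has a moment of some order $\beta>3$.

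The analytic core of the argument is to translate a small value of $\|\wedge^r A_{n,1}\|=\sigma_1(A_{n,1})\cdots\sigma_r(A_{n,1})$ into a small value of $\|A_{n,1}\|=\sigma_1(A_{n,1})$. This requires controlling the ratio $\sigma_1(A_{n,1})/\sigma_r(A_{n,1})$. Since by the choice of $r$ the Lyapunov spectrum satisfies $\lambda_1=\cdots=\lambda_r=0>\lambda_{r+1}$, a genuine gap at position~$r$, the Goldsheid-Margulis and Guivarc'h-Raugi theory of Oseledets filtrations under exponential moments provides a uniform estimate: there exist $\varepsilon>0$ and $C<\infty$ such that $\mathbb{E}[(\sigma_1(A_{n,1})/\sigma_r(A_{n,1}))^{\varepsilon}]\le C$ for every $n\ge 1$. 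Combined with the identity $\sigma_1^r=(\sigma_1/\sigma_r)^{r-1}\sigma_1\cdots\sigma_r$, this yields $\|A_{n,1}\|^r\le(\sigma_1(A_{n,1})/\sigma_r(A_{n,1}))^{r-1}\|\wedge^r A_{n,1}\|$, with a factor of subexponentially controlled magnitude.

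With this comparison in hand, the remainder mirrors the proof of Criterion~\ref{thm-from-RCN-to-cons}: at the stopping times $\ell_k$ defined above, the induced matrices $\tilde A_k:=A_{\ell_k}\cdots A_{\ell_{k-1}+1}$ satisfy $\|\tilde A_k\|^r\le(\sigma_1/\sigma_r)^{r-1}\rho\,C_1^{(r)}$, so for $\rho$ small enough the Lyapunov exponent of the random walk $(\ln\|\tilde A_k\|)_{k\ge 1}$ is strictly negative. The log-integrability of $\tilde B_1=B_{\ell_1,1}$ follows from the same comparison together with Lemma~\ref{lemmedusup} and the hypothesis $\mathbb{E}((\ln^+|B_1|)^{\gamma})<+\infty$ for some $\gamma>2$, the exponential integrability of $(\sigma_1/\sigma_r)^{(r-1)/r}$ absorbing the extra factor. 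Fact~\ref{fact-cont-case} applied to $(X_{\ell_n}^x)_{n\ge 0}$ then yields its recurrence, and hence that of $(X_n^x)_{n\ge 0}$. The main obstacle is precisely this singular-value comparison: making the factor $(\sigma_1/\sigma_r)^{r-1}$ quantitative enough, with sufficient moments to be absorbed within the moment budget of Criterion~\ref{thm-from-RCN-to-cons}, requires a careful marriage of the exterior power reduction with large-deviation estimates for the Lyapunov gap at position~$r$.
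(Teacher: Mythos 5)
Your reduction to the exterior power is the right starting point, and you correctly cite \cite[Lemma~4.13]{BQ}, but you do not exploit it fully, and this creates both a wrong setting and a gap that the paper's argument avoids entirely.

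First, the BQ lemma does not assert proximality and strong irreducibility of $(\wedge^r)_*\bar\mu$ on the whole of $\wedge^r\RR^d$: it produces a $T_\mu$-invariant \emph{subspace} $\cW_r\subset\wedge^r\RR^d$ on which the restricted action is proximal and strongly irreducible. In general $\cW_r\neq\wedge^r\RR^d$ and strong irreducibility fails on the full exterior power, so your application of case \textbf{I} on $\wedge^r\RR^d$ is not justified; you must restrict to $\cW_r$ as the paper does.

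Second, and more importantly, you miss the decisive feature of that same lemma: on $\cW_r$ the operator norm satisfies the two-sided bound $C^{-1}\|A\|^r\leq\|\wedge^r A\|_{\cW_r}\leq\|A\|^r$ uniformly over the closed semigroup $T_\mu$. This immediately yields, for every product $A_{n,1}$, a deterministic uniform bound $\sigma_1(A_{n,1})/\sigma_r(A_{n,1})\leq C$ (indeed $C^{-1}\sigma_1^r\leq\|\wedge^r A_{n,1}\|_{\cW_r}\leq\sigma_1\cdots\sigma_r$ forces $\sigma_1\leq C\sigma_r$). The whole machinery you invoke --- Goldsheid--Margulis/Guivarc'h--Raugi large-deviation estimates for the Lyapunov gap at position $r$, exponential moments of $\sigma_1/\sigma_r$, absorption of the factor $(\sigma_1/\sigma_r)^{r-1}$ at the random stopping time $\ell_1$ --- is therefore unnecessary, and it is not clear it would close as written: a uniform moment bound in $n$ for $(\sigma_1(A_{n,1})/\sigma_r(A_{n,1}))^{\varepsilon}$ does not by itself control $\EE[\ln(\sigma_1(A_{\ell_1,1})/\sigma_r(A_{\ell_1,1}))]$ at the random (and heavy-tailed) ladder time $\ell_1$, precisely because $\EE[\ell_1]=+\infty$; one would need an additional argument to show this term stays $o(|\ln\rho|)$ as $\rho\to 0$. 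The paper circumvents all of this: since the singular value ratio is a.s. bounded by the constant $C$, the factor enters only as a harmless additive constant $\frac{1}{r}\ln C$ in the Lyapunov exponent estimate, and the reverse-norm-control coefficient $C_k^{\RR^d,\cW_r}(\overline{w}):=\sup_{n\geq k}\Vert A_{n,k}\Vert/\vert A_{n,k}\overline{w}\vert_{\cW_r}^{1/r}$ satisfies $C_k^{\RR^d,\cW_r}\leq C^{1/r}(C_k^{\cW_r})^{1/r}$, so its log has the required moments by Proposition~\ref{prop-RCN} applied on $\cW_r$. (A minor point: your "$\sigma_1^r=(\sigma_1/\sigma_r)^{r-1}\sigma_1\cdots\sigma_r$" is an inequality $\leq$, not an identity, though in the direction you use.)

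In short: replace your probabilistic singular-value comparison by the deterministic norm comparison on $\cW_r$ furnished by \cite[Lemma~4.13]{BQ}, and work on $\cW_r$ rather than on all of $\wedge^r\RR^d$; then the argument reduces to a routine extension of Criterion~\ref{thm-from-RCN-to-cons} (the paper's Criterion~\ref{thm-from-RCN-to-cons-rep}), with no Lyapunov-gap estimates needed.
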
 
\begin{proof}
	 Let $T_\mu$ be the  sub-semigroup of $Gl(d, \RR)$ generated  by the support of $\mu$. Let $r=r(T_\mu)\geq 1$ be  the {\it proximal dimension} of $T_\mu$ in $\mathbb R^d$ ,  that is the least rank of a nonzero element  $M$ of the closure
	 $$
	 \overline{\mathbb R T_\mu}:= \{ M \in \mathcal M(d, \mathbb R)\mid M=\lim_{n \to +\infty} \lambda_n M_n \ with \ \lambda_n \in \mathbb R, M_n \in T_\mu\}. 
	 $$   	
	Let us now consider the action of ${\rm Gl}(d,  \mathbb R)$ on the  $r$-exterior product $\wedge^r\RR^d$ and denote by $\|\wedge^r A\|$ the related operator norm of $A\in {\rm Gl}(d,  \mathbb R)$.   
	 By \cite[Lemma 4.13]{BQ}, there exists a $T_\mu$-invariant subspace ${\cW_r}$ of $\wedge^r \RR^d$   such that 
	 the action of $T_\mu$ on   ${\cW_r}$ is proximal (i.e. $T_\mu$ contains a proximal element)  
	 and strongly irreducible. Moreover, there exists $C\geq 1$  such that, for any $A\in T_\mu$, one has 
	 \begin{equation}\label{eq-norm-wedge}
	 	C^{-1}\|A\|^r\leq\|\wedge^rA\|_{_{\cW_r}}\leq \|A\|^r 
	 \end{equation}
	 (where  $\|\wedge^rA\|_{_{\cW_r}}$ denotes  the norm  of the restriction of $\wedge^rA$  to ${\cW_r}$).

 We claim that the action of the random matrices $A_n$ on ${\cW_r}$ by the standard representation $
 {\rm Gl}(d, \mathbb R) \to  {\rm Gl}(d, \wedge^r\mathbb R)$ satisfies   hypotheses \textbf{I}.  Indeed, by  (\ref{eq-norm-wedge}),    
 
 \indent $(i)$ Assumptions {\bf I}-{\it (Exponential) Moments and Reverse moments} on $\RR^d$,  with exponent $\varepsilon>0,$ imply :
 $$ \EE\left( \Vert \wedge^rA_1\Vert_{_{\cW_r}} ^{\varepsilon/r}\ \right)\leq \EE\left( \Vert A_1\Vert ^{r\varepsilon/r}\ \right) <+\infty\mbox{ and } \EE\left( \Vert \wedge^rA_1^{-1}\Vert_{_{\cW_r}} ^{\varepsilon/r}\ \right)\leq  \EE\left( \Vert A_1^{-1}\Vert ^{r\varepsilon/r}\ \right) <+\infty. $$
 In other words,  the same  hypotheses hold  for the standard representation on ${\cW_r}$,  with exponent $\varepsilon/r$.
 
$(ii)$
 The  Lyapunov exponent $\gamma_{\bar\mu}^{^{\cW_r}}$ of $\wedge^r A_n$ on ${\cW_r}$ equals $ r \gamma_{\bar\mu}$, since by (\ref{eq-norm-wedge}) 
 $$r \frac{\EE(\ln\|A_{n,1}\|)}{n} -\frac{\ln C}{n}\leq \frac{\EE(\ln\|\wedge^rA_{n,1}\|_{_{\cW_r}})}{n} \leq r \frac{\EE(\ln\|A_{n,1}\|)}{n}.$$  
In particular the centring hypothesis holds for the action of the $A_n$ on $\RR^d$ if and only if it holds for the action of  $\wedge^r A_n$ on ${\cW_r}$.
	 
 Consequently assumptions  {\bf I} hold on ${\cW_r}$ and there exists a unique stationary probability measure $\nu_{_{\cW_r}}$  on ${\bf P}({\cW_r})$.  Let $\overline{W}_0$ be a random variable with distribution $\nu_{_{\cW_r}}$.
 
 As in section \ref{sectionrecurrence}, we consider  the ladder times $\ell_1^{^{(\rho), {\cW_r}}}, 0< \rho <1$ and  the reverse norm coefficients $ C_k^{^{\cW_r}} $ associated to  the standard representation on ${\cW_r}$  as follows:  for any $\overline{w} \in \mathbb P_1({\cW_r}),$	 
	$$ 
\ell_1^{^{^{(\rho), {\cW_r}}}}(\overline{w}):= \inf \{n \geq 1 \mid  \vert A_n \cdots A_1\overline{w}\vert_{_{\cW_r}} \leq \rho\} \mbox{ and }
C_k^{^{\cW_r}}(\overline{w}):= \sup_{n \geq k} {\Vert\wedge^r A_n\cdots A_{k}\Vert_{_{\cW_r}} \over 
		\vert A_n\cdots A_{k} \overline{w}\vert_{_{\cW_r}} }.$$	
These ladder times are almost surely finite since, by Theorem~4.11 of \cite{BQ} together with \eqref{eq-norm-wedge}, the log-norm cocycle on $\mathcal W_r$ is centered with positive variance. 
In order to control the norms    $\Vert A_n\ldots A_k\Vert $ from the random variables  $ C_k^{^{\cW_r}}(\overline{w})$,  we   introduce a ``generalized reverse norm control coefficient''  defined as follows: 
		$$C_{ k}^{^{\RR^d,{\cW_r}}}(\overline{w}):= \sup_{n \geq k} {\Vert A_n\cdots A_{k}\Vert \over 
	\vert A_n\cdots A_{k} \overline{w}\vert_{_{\cW_r}}^{1/r} }.$$
Observe that, by (\ref{eq-norm-wedge}),
$$C_{ k}^{^{\RR^d,{\cW_r}}}(\overline{w})= \sup_{n \geq k} {\Vert A_{n,k}\Vert \over 
	\vert A_{n,k} \overline{w}\vert_{_{\cW_r}}^{1/r} }\leq C^{1/r}\sup_{n \geq k} {\Vert\wedge^r A_{n,k}\Vert^{1/r} \over 
	\vert A_{n,k} \overline{w}\vert_{_{\cW_r}}^{1/r} }= C^{1/r} \left(C_{ k}^{^{\cW_r}}(\overline{w})\right)^{1/r} .$$
	
Proposition \ref{zekbghjkd} and \ref{prop-RCN}  enable to control the  moments of the  random variables $\ell_1^{^{^{(\rho), {\cW_r}}}}(\overline{W}_0)$ and  $ C_k^{^{\cW_r}}(\overline{W}_0)$ and ensure that, for  $\alpha<1/2$ and $\beta>3,$ 
 $$\mathbb E\left(\left(\ell_1^{^{(\rho), {\cW_r}}}(\overline{W}_0)\right)^\alpha\right)<+\infty\, \mathbb E\left((\ln^+ C_{ 1}^{^{\cW_r}}(\overline{W}_0))^\beta\right)<+\infty \ {\rm and} \ \mathbb E\left(\left(\ln^+ C_{ 1}^{^{\RR^d,{\cW_r}}}(\overline{W}_0)\right)^\beta\right)<+\infty.$$

We  conclude the proof  using the following criterion   which 
  slightly generalizes Criterion \ref{thm-from-RCN-to-cons}.
\end{proof}
\begin{criterion}\label{thm-from-RCN-to-cons-rep}
	Let $(\cW,|\cdot|_{_\cW})$ be a $\RR$-vector space and consider a representation  $\pi$ of ${\rm Gl}(d,\RR)$ on ${\rm Gl}(\cW)$. For simplicity, we write $Aw=\pi(A)w$ for any $w\in \cW$.
	
	Let $\nu_{_\cW}$ be a $\mub$-invariant probability measure on   the protective space ${\bf P}(\cW)$,    of $\cW$  and  let $ \overline{W}_0$  be  a random variable  with distribution   $\nu_{_\cW}$. 	
	
	For $\overline{w}\in \PP(\cW), 0<\rho<1$ and $r>0$, let
	\begin{equation}\label{RNCrandomvariable-rep} 
	\ell_1^{^{(\rho), \cW}}(\overline{w}):= \inf \{n \geq 1 \mid  \vert A_n \cdots A_1\overline{w}\vert_{_\cW} \leq \rho\}  
		\mbox{ and  }	
		C_{k}^{^{\RR^d,\cW}}(\overline{w}):= \sup_{n \geq k} {\Vert A_n\cdots A_{k}\Vert \over 
			\vert A_n\cdots A_{k} \overline{w}\vert_{_\cW}^{1/r} }
	\end{equation}
	
	Assume that   there exist   constants   $\alpha >0$, $   \beta >1 + {1\over \alpha}$  and $\gamma\geq \max\{{1\over \alpha},1\} $ such that the following hypotheses hold:
	
	${\bf A}_1^{^\cW}(\alpha)$-$\quad 
	\mathbb E\left(\left(\ell_1^{^{(\rho), \cW}}(\overline{W}_0)\right)^\alpha\right)<+\infty $ for any $\rho \in (0, 1)$.
	
	${\bf A}_2^{^\cW} (\beta) $-$  \quad   
	\mathbb E\left((\ln^+ C_{1}^{^{\RR^d,\cW}}(\overline{W}_0))^\beta\right)<+\infty  $ for some $r>0$. 	
	
	${\bf B} (\gamma)$-$\quad   
	\mathbb E\left((\ln^+\vert B_1\vert)^{\gamma}\right)<+\infty.
	$
	
	\noindent Then, for any $x \in \mathbb R^d$,   the chain $(X_n^x)_{n \geq 0}$ is recurrent.
\end{criterion}	
\noindent {\bf Proof of criterion \ref{thm-from-RCN-to-cons-rep}}.
The proof follows the same strategy as the one  of criterion \ref{thm-from-RCN-to-cons}. We present the main steps.

Let $(\overline{W}^{(k)})_{k \geq 0}$ be a sequence of independent and identically distributed random variables  with  values in ${\bf P}(\cW)$ and distribution $\nu_{_\cW}$. Assume that this sequence is independent of $(A_n, B_n)_{n \geq 1}$. As in the proof of  Criterion \ref{thm-from-RCN-to-cons}, we can suppose  $\overline{W}^{(0)}=\overline{W}_0$ without loss of generality. 
Fix $0<\rho<1$ such that $\ln \rho<-r\EE \left(\ln  C_{1}^{^{\RR^d,\cW}}(\overline{W}_0)\right).$
We set $\ell_0 =0$
and, for any $k \geq 1$, 
\[
\ell_k :=  \inf\{n \geq \ell_{k-1}+1: \vert A_n \cdots A_{\ell_{k-1}+1}  \overline{W}^{(k-1)} \vert_{_\cW} \leq \rho\}. 
\]
 Then,   the sub-process $(X_{\ell_n}^x)_{n \geq 0}$ is a multidimensional affine recursion corresponding to the  i.i.d. affine random transformations $\widetilde{g}_k$ with the same law as $\widetilde g_1$ given by 
$$
\widetilde{g}_1=(\widetilde{A_1}, \widetilde{B_1}):=(A_{\ell_{1}}\dots A_{1},\sum_{i= 1}^{\ell_{1}} A_{\ell_{1}} \cdots A_{i+1} B_i).$$
Let us now check that  the sub-process $(X_{\ell_n})_{n \geq 0}$ satisfies the hypotheses of Fact \ref{fact-cont-case}; this will prove that  $(X_{\ell_n})_{n \geq 0}$, hence $(X_n)_{n \geq 0}$, is recurrent. 

On the one hand, the Lyapunov exponent of the distribution $\tilde \mu$ of $\widetilde{g}_1$ is negative : 
$$\gamma_{\widetilde \mu}\leq \EE(\log \Vert A_{\ell_1} \cdots A_1\Vert) \leq \EE\left(\log(C_{ 1}^{^{\RR^d,\cW}}(\overline{W}_0) \vert A_{\ell_1} \cdots A_1 \overline{W}_0 \vert_{_\cW}^{1/r})\right) 
\leq \frac{\ln \rho}{r}+ \EE \left(\ln  C_{1}^{^{\RR^d,\cW}}(\overline{W}_0)\right)<0.$$

On the other hand, in order to check that $\widetilde{B_1}=B_{\ell_1,1}$ is log-integrable, we observe that 
\begin{align*}
	\vert B_{\ell_1,1}\vert  &\leq
	\sum_{i={1}}^{\ell_1} \Vert A_{\ell_1}\cdots A_{i+1}\Vert  \vert B_i\vert \leq \sum_{i=1}^{\ell_1} C_{i+1}^{^{\RR^d,\cW}}(\overline{W}_i) \vert B_i\vert \quad \mathbb P\text{-a.s.}
\end{align*}
where $\overline{W}_i:=A_{i,1}\cdot \overline{W}_0$. Indeed,    
\begin{align*}
	\Vert A_{\ell_1}\cdots A_{i+1}\Vert
	&\leq 
	C_{ i+1}^{^{\RR^d,\cW}}(\overline{W}_i)\vert A_{\ell_1}\cdots A_{i+1} \overline{W}_i \vert_{_\cW}^{1/r}
	\\
	&= 
	C_{ i+1}^{^{\RR^d,\cW}}(\overline{W}_i){\vert A_{\ell_1}\cdots A_{1} \overline{W}_0 \vert_{_\cW}^{1/r} \over \vert A_{i}\cdots A_{1} \overline{W}_0 \vert_{_\cW}^{1/r}}\leq 	C_{ i+1}^{^{\RR^d,\cW}}(\overline{W}_i){\rho^{1/r} \over \rho^{1/r} }= 	C_{ i+1}^{^{\RR^d,\cW}}(\overline{W}_i).
\end{align*}
Consequently,
\begin{equation*}
	\ln^+\vert B_{\ell_1,1}\vert 
	 \leq \ln^+\ell_1+\max_{1\leq i \leq \ell_1 } \ln^+C_{ i+1}^{^{\RR^d,\cW}}(\overline{W}_i) +\max_{1\leq i 
	 	\leq \ell_1} \ln^+ \vert B_i\vert\quad \mathbb P\text{-a.s.}
\end{equation*}
and we can conclude as in Criterion \ref{thm-from-RCN-to-cons}
by using Lemma \ref{lemmedusup}.

\rightline{$\Box$}
 \section*{Acknowledgements}

The authors would like to thank an anonymous referee for careful reading and remarks.

\end{document}